\newcommand{\cz}[2]{C_{#1}(#2)}
\newcommand{\n}[2]{N_{#1}(#2)}
\newcommand{\op}[2]{O_{#1}(#2)}
\newcommand{\ff}[1]{F(#1)}
\newcommand{\compp}[1]{\operatorname{comp}(#1)}
\newcommand{\comp}[2]{\operatorname{comp}_{#1}(#2)}
\newcommand{\layerr}[1]{E(#1)}
\newcommand{\gfitt}[1]{F^{*}(#1)}
\newcommand{\zz}[1]{Z(#1)}
\newcommand{\card}[1]{|\,#1\,|}
\newcommand{\syl}[2]{\operatorname{Syl}_{#1}(#2)}
\newcommand{\aut}[1]{\operatorname{Aut}(#1)}
\newcommand{\gen}[2]{\langle \;#1 \mid #2\; \rangle}
\newcommand{\listgen}[1]{\langle \;#1\; \rangle}
\newcommand{\rank}[1]{\operatorname{rank}(#1)}
\newcommand{\set}[2]{\{ \;#1 \mid #2\;\}}
\newcommand{\listset}[1]{\{ \,#1\, \}}
\newcommand{\sym}[1]{\operatorname{Sym}(#1)}
\newcommand{\isomorphic}{\cong}
\newcommand{\normal}{\,\unlhd\,}
\newcommand{\subnormal}{\,\unlhd\unlhd\,}
\newcommand{\openz}{{\mathbb Z}}
\newcommand{\br}[1]{\overline{#1}}
\newcommand{\nonid}{^\#}
\newcommand{\Amax}{\mathcal A_{\mbox{\it max}}}
\newcommand{\Amin}{\mathcal A_{\mbox{\it min}}}
\theoremstyle{plain}
\newtheorem{Theorem}{Theorem}[section]
\newtheorem{Lemma}[Theorem]{Lemma}
\newtheorem{Corollary}[Theorem]{Corollary}
\newtheorem{Hypothesis}[Theorem]{Hypothesis}
\newtheorem*{TheoremA}{Theorem~A}
\theoremstyle{definition}
\theoremstyle{remark}
\newtheorem*{UnnumberedRemark}{Remark}
\numberwithin{equation}{section}
\begin{document}

% \title[short text for running head]{full title}
\title{A characterization of $A$-simple groups}

%    Only \author and \address are required; other information is
%    optional.  Remove any unused author tags.

%    author one information
% \author[short version for running head]{name for top of paper}
\author{Paul Flavell}
\address{The School of Mathematics\\University of Birmingham\\Birmingham B15 2TT\\Great Britain}
\email{P.J.Flavell@bham.ac.uk}
\thanks{A considerable portion of this research was done whilst the author was in receipt
of a Leverhulme Research Project grant and during visits to the Mathematisches Seminar,
Christian-Albrechts-Universit\"{a}t, Kiel, Germany.
The author expresses his thanks to the Leverhulme Trust for their support and
to the Mathematisches Seminar for its hospitality.}

%    \subjclass is required.
\subjclass[2010]{Primary 20D45 20D05 20E34 }

\date{}

%    Abstract is required.
\begin{abstract}
    Let $A$ be an elementary abelian $r$-group with rank at least $3$
    that acts faithfully on the finite $r'$-group $G$.
    Assume that $G$ is $A$-simple,
    so that $G = K_{1} \times\cdots\times K_{n}$
    where $K_{1},\ldots,K_{n}$ is a collection of simple subgroups
    of $G$ that is permuted transitively by $A$.
    The purpose of this paper is to characterize $G$ and the collection
    of fixed point subgroups $\{ C_{G}(a) \;|\; a \in A^{\#} \}$.
    An application of this result will be a new proof of 
    McBride's Nonsolvable Signalizer Functor Theorem.
\end{abstract}

\maketitle
\section{Introduction}\label{intro}
Throughout this paper,
$r$ is a prime and $A$ is an elementary abelian $r$-group
with rank at least $3$.
Suppose that $A$ acts faithfully on the finite $r'$-group $G$
and that $G$ is $A$-simple.
Then $G = K_{1} \times\cdots\times K_{n}$ where $K_{1},\ldots,K_{n}$
is a collection of simple subgroups of $G$ that is permuted
transitively by $A$.
The \emph{type} of $G$ is defined to be the isomorphism class of $K_{1}$.
Let $a \in A\nonid$ and suppose that $K_{1}$ is one of the known
simple groups.
Since $A$ is elementary abelian,
exactly one of the following holds:
\begin{enumerate}
    \item[(a)]  $\listgen{a}$ acts semiregularly on $\listgen{K_{1},\ldots,K_{n}}$
                and $\cz{G}{a}$ is $A$-simple with the same type as $G$.

    \item[(b)]  $a$ normalizes each $K_{i}$, $\layerr{\cz{K_{i}}{a}}$ is simple,
                $\ff{\cz{K_{i}}{a}} = 1$ and $\layerr{\cz{G}{a}}$ is $A$-simple.

    \item[(c)]  $a$ normalizes each $K_{i}$ and $\cz{G}{a}$ is solvable.
\end{enumerate}
Moreover in cases (a) and (b), \[
    \cz{A}{\layerr{\cz{G}{a}}} = \listgen{a}.
\]
These facts are established in \cite{I}.
For all $a,b \in A\nonid$ we have \[
    \layerr{\layerr{\cz{G}{a}} \cap \cz{G}{b} } \leq \layerr{\cz{G}{b}}
\]
since in (b),
$\cz{G}{a}/\layerr{\cz{G}{a}}$ is solvable.
In addition \[
    G = \gen{ \layerr{\cz{G}{a}} }{ a \in A\nonid }.
\]
The purpose of this paper is to prove that these conditions characterize $G$
and the collection of fixed point subgroups $\set{ \cz{G}{a} }{ a \in A\nonid }$.
We shall prove:
\begin{TheoremA} \label{thmA}
    Let $r$ be a prime and $A$ an elementary abelian $r$-group of rank at least $3$
    that acts on the group $G$.
    Suppose that for each $a \in A\nonid$ that $\theta(a)$ is a
    finite $A$-invariant $r'$-subgroup of $\cz{G}{a}$
    and that the following hold:
    \begin{enumerate}
        \item[(a)]  If $a \in A\nonid$ with $\layerr{\theta(a)} \not= 1$
                    then $\layerr{\theta(a)}$ is an $A$-simple $K$-group,
                    $\ff{\theta(a)} = 1$ and $\cz{A}{\layerr{\theta(a)}} = \listgen{a}$.

        \item[(b)]  For all $a,b \in A\nonid$, \[
                        \theta(a) \cap \cz{G}{b} \leq \theta(b)
                    \]
                    and \[
                        \layerr{\layerr{\theta(a)} \cap \cz{G}{b}} \leq \layerr{\theta(b)}.
                    \]

        \item[(c)]  $G = \gen{ \layerr{\theta(a)} }{ a \in A\nonid }$.
    \end{enumerate}
    Then $G$ is a finite $r'$-group,
    it is $A$-simple and a $K$-group.
    Moreover \[
        \cz{G}{a} = \theta(a)
    \]
    for all $a \in A\nonid$.
\end{TheoremA}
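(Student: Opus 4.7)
The plan is to reconstruct the global $A$-simple structure of $G$ from the local data $\theta(a)$. By hypothesis (c) there is some $a_{0} \in A\nonid$ with $\layerr{\theta(a_{0})} \not= 1$, so by (a), $L := \layerr{\theta(a_{0})}$ is an $A$-simple $K$-group; write $L = K_{1} \times \cdots \times K_{n}$ with the components permuted transitively by $A$, and let $T$ denote the isomorphism class of $K_{1}$. The ultimate target is to show that $G$ itself is $A$-simple of type $T$ and that $\cz{G}{a} = \theta(a)$ for all $a \in A\nonid$.

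The first technical step is to pin down the structure of each nontrivial $\layerr{\theta(b)}$ relative to $L$. Condition (b), applied in both directions between $a_{0}$ and $b$, produces mutual inclusions between the layers of $\layerr{\theta(a_{0})} \cap \cz{G}{b}$ and $\layerr{\theta(b)} \cap \cz{G}{a_{0}}$. Combined with the trichotomy (a)--(c) from the introduction that governs fixed points inside any $A$-simple $K$-group, and with the constraint $\cz{A}{\layerr{\theta(b)}} = \listgen{b}$ from (a), this should force $\layerr{\theta(b)}$ to be $A$-simple of the same type $T$ and to arise as either a "semiregular" or a "componentwise" centralizer inside an $A$-invariant enlargement of $L$.

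With types matched across $A\nonid$, I would next glue the $\layerr{\theta(b)}$ together into a single $A$-simple $K$-group $G^{*}$ of type $T$. The rank-$3$ hypothesis is essential here: for any pair $a,b \in A\nonid$ there are a third element $c \in A\nonid$ pivoting between them and multiple hyperplanes of $A$ containing any two. Iterating this pivot device, condition (b) transports componentwise identifications around all of $A$ and produces a coherent $G^{*}$ with $\cz{G^{*}}{a} = \theta(a)$ for every $a \in A\nonid$. Then by (c), $G = \gen{\layerr{\theta(a)}}{a \in A\nonid} \leq G^{*}$, and the $A$-simplicity of $G^{*}$ together with the non-triviality of $G$ force $G = G^{*}$, giving all the conclusions of the theorem.

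I expect the main obstacle to be this third step: coherent componentwise book-keeping of how $A$ acts on the direct factors of each $\layerr{\theta(b)}$, and a delicate double application of (b) to match components across different $b$'s. The rank $\geq 3$ hypothesis is precisely what supplies enough pivot elements to propagate consistency and exclude the degenerate overlapping patterns that would be unavoidable in rank $2$. The hypothesis $\ff{\theta(a)} = 1$ in (a) should also be used to rule out spurious solvable contributions to the gluing.
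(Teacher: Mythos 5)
Your sketch does not survive contact with the actual difficulties, and its first substantive claim is false. You assert that the mutual inclusions coming from hypothesis (b), together with the trichotomy from the introduction, ``should force $\layerr{\theta(b)}$ to be $A$-simple of the same type $T$'' for every $b$ with $\layerr{\theta(b)}\neq 1$. This is not true, and it is exactly the point of the paper's Part~1. If $b$ acts on the components of some $E_{a}$ by normalizing each of them nontrivially (case (b) of the introductory trichotomy), then $\layerr{\theta(b)}$ has type $\layerr{\cz{K}{b}}$, which is a \emph{proper} subgroup of $K$, hence of strictly smaller order. The paper handles this by choosing the type $K$ of \emph{maximal} order, splitting $A\nonid$ into $\Amax$ and $\Amin$, and proving (Lemmas~\ref{f:2}--\ref{f:5}, Theorem~\ref{f:6}) that $\Amin$ generates a cyclic subgroup $A_{\infty}$ of order $1$ or $r$, with a uniform description of $E_{a}$ and $A_{a}$ for $a\notin A_{\infty}$. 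Your choice of an arbitrary $a_{0}$ loses the maximality that drives the counting in Lemma~\ref{f:3}, and your conclusion that types always match is precisely the statement that $\Amin=\emptyset$, which need not hold.

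The second half of your plan, gluing the $\layerr{\theta(b)}$ into an abstract $A$-simple $G^{*}$ and then forcing $G=G^{*}$, is also not what the paper does and is not backed by any real argument in your write-up. The paper never leaves $G$: it defines, for a hyperplane $B$ with $A_{\infty}<B<A$, the set $\Omega(B)$ of $B$-components of the $E_{b}$ with $b\in B\setminus A_{\infty}$, and reduces everything (Theorem~\ref{s:3}) to showing that ``does not commute'' is an equivalence relation on $\Omega(B)$. That yields a \emph{central product} decomposition of $G$ directly, from which finiteness and the $r'$-property follow via Lemma~\ref{p:4} (a Schur-type argument) together with the fact that each central factor is a quotient of some $\theta(a)$. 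Your sketch nowhere addresses how $G$ could be seen to be finite, nor why it should be an $r'$-group; these are genuine conclusions that require proof. Most seriously, the hardest step --- showing that distinct $B$-components lying in different $E_{a}$'s commute (Lemmas~\ref{t:2} through \ref{t:6}) --- is the entire content of Part~3 and uses the Transitivity Theorem for signalizer functors, generalized Fitting subgroups of the $B$-fixed points, and the specific $K$-group generation statement of Lemma~\ref{p:1}(c). Your ``pivot device'' gestures at the use of rank $\geq 3$, but none of this machinery is in your proposal, and there is no indication of how you would obtain the commutation relations without it. In short: the first step is wrong as stated, the gluing strategy is replaced in the paper by an internal central-product decomposition, and the genuinely hard commutation lemma is not engaged at all.
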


A number of remarks are in order.
\begin{itemize}
    \item   The first condition on (b) asserts that $\theta$ is an $A$-signalizer functor on $G$.
            The conclusion asserts that $\theta$ is complete.
            Theorem~\ref{thmA} is therefore a special case of the
            Nonsolvable Signalizer Functor Theorem.

    \item   The Nonsolvable Signalizer Functor Theorem was proved by
            McBride \cite{McB1, McB2} and plays a fundamental role
            in the proof of the Classification of the Finite Simple Groups,
            see \cite{GLS1} for example.
            This paper is one of a sequence of papers on automorphisms of $K$-groups,
            one of whose aims is to give a new proof of the
            Nonsolvable Signalizer Functor Theorem.
            In both his papers,
            McBride proves results similar to Theorem~\ref{thmA}.
            Hence this paper may be regarded as a revision of
            part of McBride's work.

    \item   Theorem~\ref{thmA} is analogous to the standard form problems
            that arise in the proof of the Classification of the Finite Simple Groups.

    \item   Recall that a $K$-group is a finite group all of whose simple sections
            are known simple groups.
            Since the main application of the Nonsolvable Signalizer Functor Theorem
            is to the Classification,
            the $K$-group assumption in (a) causes no difficulty.
            A crucial consequence is that if $a$ is an automorphism with order $r$
            of the simple $r'$-group $K$ then $\cz{K}{a}$
            has a unique minimal normal subgroup.
            It would be interesting to see if it possible to establish
            this without the $K$-group assumption.
\end{itemize}

\section{Preliminaries}\label{p}
\begin{Lemma}\label{p:1}
    \begin{enumerate}
        \item[(a)]  Suppose that $K$ is a simple $K$-group and an $r'$-group.
                    Then the Sylow $r$-subgroups of $\aut{K}$ are cyclic.

        \item[(b)]  Suppose that $A$ is an elementary abelian $r$-group that acts
                    coprimely on the $K$-group $G$.
                    \begin{itemize}
                        \item[(i)]  If $G$ is $A$-simple then $\gfitt{\cz{G}{A}}$
                                    is the unique minimal normal subgroup of $\cz{G}{A}$.

                        \item[(ii)] If $K \in \comp{A}{G}$ then $\cz{G}{\cz{K}{A}} = \cz{G}{A}$.
                    \end{itemize}

        \item[(c)]  Suppose that $B$ is an elementary abelian $r$-group and that $K$
                    is a simple $K$-group and an $r'$-group.
                    Then there exists a prime $p$ such that whenever $J$ is $B$-simple
                    of type $K$ then \[
                        J = \listgen{ \cz{J}{B}, \zz{P} }
                    \]
                    for all $P \in \syl{p}{J;B}$.
    \end{enumerate}
\end{Lemma}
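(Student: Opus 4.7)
The overall plan is to reduce each part to a Classification-based case check, enabled by the $K$-group hypothesis throughout.

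For (a), since $K$ is an $r'$-group, a Sylow $r$-subgroup of $\aut{K}$ injects into $\out{K}$, and we must show the latter has cyclic Sylow $r$-subgroups. Using the Classification: for $K = \alt{n}$ with $r \nmid n!$ we have $|\out{K}| \leq 4$; for sporadic $K$ we have $|\out{K}| \leq 2$; and for Lie type $K = X(q)$ with $q = p^{f}$ the hypothesis $r \nmid |K|$ forces $r \neq p$, while the Steinberg decomposition writes $\out{K}$ as diagonal $\cdot$ field $\cdot$ graph, with the field part cyclic, the graph part of order at most $6$ (and having cyclic Sylow subgroups), and the diagonal cyclic except for $K = D_{2n}(q)$ with $q$ odd, where it is a Klein four-group; but in this exceptional case the diagonal is a $2$-group and $K$ is not a $2'$-group, so the case does not arise. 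In every case a Sylow $r$-subgroup of $\out{K}$ is cyclic.

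For (b)(i), write $G = K_{1} \times \cdots \times K_{n}$ with the $K_{i}$ simple and permuted transitively by $A$; let $A_{0} \leq A$ be the kernel of this permutation action. Since $A$ is abelian and transitive on the factors, $A/A_{0}$ acts regularly, so $n = |A/A_{0}|$ and $N_{A}(K_{1}) = A_{0}$. Projection onto the first coordinate defines an isomorphism $\cz{G}{A} \cong \cz{K_{1}}{A_{0}}$: an $A$-fixed tuple $(g_{1},\ldots,g_{n})$ is determined by $g_{1} \in \cz{K_{1}}{A_{0}}$, since the remaining coordinates are forced as translates under the regular $A/A_{0}$-action. This reduces (i) to the case $G$ simple, where one extends the cyclic ``crucial consequence'' quoted in the Introduction to elementary abelian $A$ by induction on $\rank{A}$: for rank $>1$, pick $a \in A\nonid$, observe that $\cz{G}{A} = \cz{\cz{G}{a}}{A/\listgen{a}}$, apply the inductive hypothesis to the action of $A/\listgen{a}$ on $\gfitt{\cz{G}{a}}$ (the unique minimal normal subgroup of $\cz{G}{a}$), and conclude that $\gfitt{\cz{G}{A}}$ is the unique minimal normal subgroup of $\cz{G}{A}$. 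For (b)(ii), apply (i) to $K$ to see that $\gfitt{\cz{K}{A}}$ is the unique minimal normal subgroup of $\cz{K}{A}$. The inclusion $\cz{G}{A} \leq \cz{G}{\cz{K}{A}}$ is obtained from coprime-action arguments: $\cz{G}{A}$ normalises $K$ and $\cz{K}{A}$, and its action on the simple socle $\gfitt{\cz{K}{A}}$ must be trivial by rigidity. The reverse inclusion $\cz{G}{\cz{K}{A}} \leq \cz{G}{A}$ relies on the Classification-based fact that $\cz{K}{A}$ is ``large'' in $K$: any automorphism of $K$ centralising it must commute with $A$.

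For (c), the plan is to choose a prime $p$ depending only on the isomorphism type of $K$. For $K$ of Lie type in characteristic $\neq r$, take a Zsigmondy prime for an appropriate cyclotomic factor of the order formula; for alternating or sporadic $K$, pick $p$ by inspection of the Classification tables. The prime is selected so that for any $B$-simple $J$ of type $K$ and any $P \in \syl{p}{J;B}$, the subgroup $\zz{P}$ sits inside a $B$-invariant abelian ``torus-like'' subgroup whose normaliser combined with $\cz{J}{B}$ generates $J$. Because the selection depends only on the type $K$, it works uniformly for all such $J$.

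The main obstacle is part (c): one must verify the generation $J = \listgen{\cz{J}{B}, \zz{P}}$ simultaneously across all $B$-simple $J$ of type $K$ and all $B$-invariant Sylow $p$-subgroups, a genuinely detailed case analysis using Classification information about centralisers of semisimple elements and generating sets of simple groups. A close second in difficulty is the reverse inclusion in (b)(ii), which hides similar structural work via the $K$-group hypothesis.
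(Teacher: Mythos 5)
The paper's own proof of this lemma is essentially a citation: parts (a) and (b) are delegated wholesale to \cite[Theorem~4.1]{I}, and part (c) is explicitly marked ``TO DO'' in the source, with only the remark that it ``reduces quickly to the case when $J$ is simple and $B$ has order $r$.'' So there is no detailed argument in this paper against which your sketch can be checked line by line; you are attempting to reconstruct work the author carried out in a companion paper (for (a), (b)) or has not yet written out (for (c)).

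That said, two comments on your reconstruction. In (b)(i) your induction on $\rank{A}$ is both unnecessary and does not close as stated. It is unnecessary because (a) already tells you that a Sylow $r$-subgroup of $\aut{K}$ is cyclic, so after reducing to $G$ simple the image of $A$ in $\aut{G}$ has order dividing $r$ and you are immediately in the cyclic case covered by the ``crucial consequence.'' It does not close because the inductive step hands $A/\listgen{a}$ an action on $\gfitt{\cz{G}{a}}$, which need not be a simple group --- it can be a direct product of several simple factors or have a nontrivial solvable part --- so the inductive hypothesis, phrased for $G$ simple, does not apply to it without first re-deriving the $A$-simple reduction inside the induction, which is circular. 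Also note that ``$\cz{K}{a}$ has a unique minimal normal subgroup'' is weaker than the stated conclusion that $\gfitt{\cz{G}{A}}$ \emph{is} that unique minimal normal subgroup; the identification with the generalized Fitting subgroup is part of what \cite[Theorem~4.1]{I} supplies and your sketch elides it.

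For (c) you correctly flag this as the main obstacle and give only a plan (Zsigmondy primes, Classification tables). The author's note that it reduces to $J$ simple and $|B| = r$ suggests the same reduction you use in (b)(i), namely factoring through the stabilizer of a component and then collapsing $B$ to its image in $\aut{J}$, which (a) forces to be cyclic of order at most $r$; it would be worth making that reduction explicit in your write-up since it cuts the case analysis substantially. Your (a) sketch (field/graph/diagonal for Lie type, small $\out{K}$ for alternating and sporadic, and the observation that the only non-cyclic diagonal case $\mathrm{D}_{2n}(q)$ with $q$ odd is a $2$-group so cannot be relevant when $r=2$ is excluded by the $r'$-hypothesis) is a reasonable outline of the standard Classification check; it is consistent with what one expects \cite[Theorem~4.1]{I} to contain, though of course it is not itself a complete verification.
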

\begin{proof}
    (a) and (b) follows from \cite[Theorem~4.1]{I}.

    (c). TO DO. This reduces quickly to the case when $J$ is simple
    and $B$ has order $r$.
\end{proof}

\begin{Lemma}\label{p:2}
    Suppose that $A$ acts coprimely on the $K$-group $X$ and that $X$ is $A$-simple.
    Suppose also that $B \leq A$ is noncyclic.
    Then \[
        X = \gen{ \cz{X}{b} }{ \mbox{$b \in B\nonid$ and $\cz{X}{b}$ is $A$-simple %
                                with the same type as $X$} }.
    \]
\end{Lemma}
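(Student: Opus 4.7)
Let $A_{0} = \bigcap_{i}\stab{A}{K_{i}}$ be the kernel of the $A$-action on $\{K_{1},\dots,K_{n}\}$ and set $B_{0} = B \cap A_{0}$. The type~(a) elements of $B\nonid$ are exactly those of $B \setminus B_{0}$, while the only other $b \in B\nonid$ for which $\cz{X}{b}$ is $A$-simple of the same type as $X$ are those with $b \in \cz{A}{X}$ (in which case $\cz{X}{b} = X$). The strategy is to dispatch the case $B \cap \cz{A}{X} \neq 1$ (where the claim is immediate), reduce to $B$ transitive on $\{K_{1},\dots,K_{n}\}$, and then show that the right-hand side $Y$ surjects under every pairwise projection $\pi_{ij} : X \to K_{i} \times K_{j}$; Goursat's lemma for direct products of non-abelian simple groups will then force $Y = X$.

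For the reductions, assume $B \cap \cz{A}{X} = 1$. Since $A$ is abelian and transitive on $\{K_{i}\}$, the $A_{0}$-action on each $K_{i}$ is conjugate via $A$ to its action on $K_{1}$, yielding an injection $A_{0}/\cz{A}{X} \hookrightarrow \aut{K_{1}}$; Lemma~\ref{p:1}(a) says the Sylow $r$-subgroups of $\aut{K_{1}}$ are cyclic, so $|A_{0}/\cz{A}{X}| \leq r$ and $|B_{0}| \leq r$. Since $B$ is noncyclic this forces $B \not\leq A_{0}$, and hence $\rank{B/B_{0}} \geq 1$. Each $b \in B$ preserves every $B$-orbit on $\{K_{i}\}$, so $\cz{X}{b}$ splits as a direct product over those orbits and $Y$ does likewise; since $A$ permutes the $B$-orbits transitively, it suffices to prove the conclusion on one $B$-orbit. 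Thus we may assume $B$ is transitive on $\{K_{1},\dots,K_{n}\}$ with $n = |B/B_{0}|$.

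For the main step, each $\cz{X}{b}$ with $b \in B \setminus B_{0}$ is a direct product of diagonal copies of $K_{1}$ indexed by the $\listgen{b}$-orbits on $\{K_{i}\}$; if $K_{i}, K_{j}$ lie in different $\listgen{b}$-orbits, then $\pi_{ij}(\cz{X}{b}) = K_{i} \times K_{j}$ outright. When $\rank{B/B_{0}} \geq 2$, every pair $K_{i} \neq K_{j}$ can be separated by some such $b$: writing $K_{j} = K_{i}^{g}$, pick $\bar{b} \in (B/B_{0})\nonid$ with $g \notin \listgen{\bar{b}}$ and lift. The delicate case is $\rank{B/B_{0}} = 1$, where $B_{0}$ has order $r$ and, via the injection above, acts on $K_{1}$ by an automorphism $\tau$ of order $r$. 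Fix $b_{1} \in B \setminus B_{0}$ and $b_{0} \in B_{0}\nonid$, and identify $K_{i}$ with $K_{1}$ via $b_{1}^{i-1}$; then $\cz{X}{b_{1}}$ is the pure diagonal of $K_{1}^{r}$ and $\cz{X}{b_{1}b_{0}}$ is the twisted diagonal $\{(x,\tau(x),\dots,\tau^{r-1}(x)) : x \in K_{1}\}$. Applying $\pi_{ij}$ gives the pure diagonal $\{(x,x) : x \in K_{1}\}$ and the twisted diagonal $\{(x,\tau^{j-i}(x)) : x \in K_{1}\}$ in $K_{i} \times K_{j}$, and multiplying $(x,x)$ by $(x,\tau^{j-i}(x))^{-1}$ yields $(1, x\tau^{j-i}(x)^{-1}) \in \pi_{ij}(Y)$ for every $x \in K_{1}$. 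Since $\tau^{j-i} \neq 1$ and $K_{1}$ is simple, $\gen{x\tau^{j-i}(x)^{-1}}{x \in K_{1}} = [K_{1},\tau^{j-i}]$ is a nontrivial normal subgroup of $K_{1}$, hence equals $K_{1}$; combined with the diagonal this gives $\pi_{ij}(Y) = K_{i} \times K_{j}$. Goursat then forces $Y = X$. The main obstacle is the rank-one subcase: guaranteeing $\tau \neq 1$ relies on the Sylow-cyclic bound of Lemma~\ref{p:1}(a), and extracting $K_{j}$ from the twisted diagonals rests on the simplicity of $K_{1}$.
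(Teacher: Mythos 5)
Your proof is correct, but it takes a genuinely different route from the paper's. The paper quotients out $\cz{A}{X}$, sets $A_\infty = \ker(A \to \sym{\compp{X}})$, and then cites three lemmas from \cite{I}: Lemma~6.4 (that $A_\infty$ is cyclic — this is where the Sylow-cyclic bound of Lemma~\ref{p:1}(a) enters on the paper's side), Lemma~6.5 (identifying which $\cz{X}{b}$ are $A$-simple of the same type and computing $\cz{A}{\cz{X}{b}}$), and Lemma~6.6 (that an \emph{overdiagonal} $A$-invariant subgroup of $X$ has the form $\cz{X}{D}$). It then shows $Y$ is overdiagonal with $\cz{A}{Y}=1$ and concludes $Y = \cz{X}{1} = X$. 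You bypass the overdiagonal machinery entirely and argue at the level of pairwise projections: you show $\pi_{ij}(Y) = K_i \times K_j$ for every pair, which suffices by the folklore Goursat-type fact for direct products of nonabelian simple groups. The interesting case $\rank{B/B_0}=1$ is handled by the explicit computation with pure and twisted diagonals and the observation that $[K_1,\tau^{j-i}]$ is a nontrivial normal, hence full, subgroup of $K_1$. Your argument is more self-contained (it uses only Lemma~\ref{p:1}(a) and elementary structure theory of subdirect products of simple groups) and makes the generation mechanism transparent, at the cost of being longer than simply citing the lemmas of \cite{I}. Two small points. First, the reduction step claims that ``$Y$ does likewise'' splits as a direct product over the $B$-orbits; that is not automatic — a priori $Y$ could be a proper subdirect subgroup of $\prod_s X_s$. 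The gap is easily closed, because for $K_i, K_j$ in \emph{different} $B$-orbits the $\listgen{b}$-orbits (which refine the $B$-orbits) already separate them for \emph{every} $b \in B\setminus B_0$, so $\pi_{ij}(Y) = K_i\times K_j$ is automatic in the cross-orbit case; your same-orbit computation then covers the rest and Goursat finishes. Second, your closing remark that ``guaranteeing $\tau \neq 1$ relies on the Sylow-cyclic bound'' is slightly misattributed: $\tau \neq 1$ follows directly from $B\cap\cz{A}{X}=1$ together with $A$-transitivity; what the Sylow-cyclic bound actually supplies is the bound $|B_0|\le r$, which pins down $n=r$ and forces $\rank{B/B_0}\ge 1$.
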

\begin{proof}
    If $\cz{B}{X} \not= 1$ then $X = \cz{X}{b}$ for any $b \in B\nonid$
    and there is nothing to prove.
    Hence we may assume that $\cz{B}{X} = 1$.
    Then we may replace $A$ by $A/\cz{A}{X}$ and assume that $\cz{A}{X} = 1$.
    Set $A_{\infty} = \ker(A \longrightarrow \sym{\compp{X}} )$.
    \cite[Lemma~6.4]{I} implies $A_{\infty}$ is cyclic so $B \cap A_{\infty} < B$.
    Let \[
        Y = \gen{ \cz{X}{b} }{ b \in B\setminus B\cap A_{\infty} }.
    \]
    \cite[Lemma~6.5]{I} implies  that $\cz{X}{b}$ is $A$-simple with the same type
    as $X$ for all $b \in B\setminus A_{\infty}$.
    Hence it suffices to show that $Y = X$.
    \cite[Lemma~6.5(d)]{I} implies $\cz{A}{\cz{X}{b}} = \listgen{b}$
    for all $b \in B\setminus B \cap A_{\infty}$.
    Then $\cz{A}{Y} \leq \listgen{b}$ so as $B$ is noncyclic
    and $B \cap A_{\infty}$ is cyclic,
    it follows that $\cz{A}{Y} = 1$.
    Note that $\cz{X}{b}$ is overdiagonal,
    which implies that $\cz{X}{b}$ projects onto each component of $X$
    for each $b \in B\setminus B\cap A_{\infty}$,
    so $Y$ is overdiagonal.
    \cite[Lemma~6.6]{I} implies $Y = \cz{X}{D}$ for some $D \leq A$.
    But $\cz{A}{Y} = 1$ so $D = 1$ and $Y = \cz{X}{1} = X$.
\end{proof}

\begin{Lemma} \label{p:3}
    Let $R$ be a group that acts coprimely on the group $G$.
    Assume that \[
        G = G_{1} *\cdots* G_{n}.
    \]
    where $\listset{G_{1},\ldots,G_{n}}$ is a collection of perfect subgroups
    of $G$ on which $R$ acts semiregularly.
    Then $\cz{G}{R}$ is perfect.
\end{Lemma}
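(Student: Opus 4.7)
The plan is to pull the problem back to the covering direct product, where coprime action converts the question into a transparent computation of diagonal subgroups.

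Form $D = G_{1} \times \cdots \times G_{n}$, on which $R$ acts by the same permutation of factors as on $G$, and let $\pi \colon D \twoheadrightarrow G$ be the canonical $R$-equivariant surjection with kernel $Z$. Then $Z \leq \zz{D}$ and $Z$ is $R$-invariant. Every prime divisor of $|D|$ divides some $|G_{i}|$ and hence $|G|$, so the coprimality hypothesis on the $R$-action transfers from $G$ to $D$. The standard coprime-action result then gives
\[
\cz{G}{R} \;=\; \cz{D/Z}{R} \;=\; \pi\bigl(\cz{D}{R}\bigr),
\]
and since every homomorphic image of a perfect group is perfect, it suffices to prove that $\cz{D}{R}$ is perfect.

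Partition $\listset{G_{1},\ldots,G_{n}}$ into $R$-orbits $\mathcal O_{1},\ldots,\mathcal O_{k}$. Semiregularity of the $R$-action forces $|\mathcal O_{j}| = |R|$ for each $j$. Let $D_{j}$ be the direct product of the factors lying in $\mathcal O_{j}$; then $D = D_{1} \times \cdots \times D_{k}$ as $R$-groups, whence
\[
\cz{D}{R} \;=\; \cz{D_{1}}{R} \times \cdots \times \cz{D_{k}}{R}.
\]
A direct product of perfect groups is perfect, so it is enough to show each $\cz{D_{j}}{R}$ is perfect.

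Fix $j$, choose a factor $G^{\circ} \in \mathcal O_{j}$, and identify $\mathcal O_{j}$ with $R$ via $r \leftrightarrow r(G^{\circ})$, so that the action of $R$ on $\mathcal O_{j}$ becomes regular left multiplication. An element $(x_{r})_{r \in R} \in D_{j}$ is then $R$-fixed if and only if $x_{r} = r(x_{1})$ for every $r$, so the assignment $x \mapsto \bigl(r(x)\bigr)_{r \in R}$ defines an isomorphism from $G^{\circ}$ onto $\cz{D_{j}}{R}$. Since $G^{\circ}$ is perfect by hypothesis, so is $\cz{D_{j}}{R}$, completing the proof.

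No serious obstacle is anticipated. The one step meriting care is the identity $\cz{G}{R} = \pi(\cz{D}{R})$, which requires applying the coprime-action lemma to $R$ acting on $D$ rather than on $G$; the prime-divisor comparison above is what validates that application, and after this the remainder of the argument is purely formal.
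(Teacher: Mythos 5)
Your proof is correct and takes essentially the same route as the paper: form the external direct product $D = G_{1} \times \cdots \times G_{n}$, note that $R$ acts coprimely on $D$, use the standard coprime-action surjectivity lemma to get $\cz{G}{R} = \pi(\cz{D}{R})$, and reduce to showing $\cz{D}{R}$ is perfect. The only difference is one of exposition: the paper dismisses the last step with ``visibly $\cz{G^{*}}{R}$ is perfect,'' whereas you spell out the orbit decomposition $D = D_{1} \times \cdots \times D_{k}$ and the explicit identification of each $\cz{D_{j}}{R}$ with a diagonal copy of a perfect factor $G^{\circ}$, which is exactly what makes that claim visible.
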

\begin{proof}
    Note that $*$ denotes a central product.
    Let $G^{*} = G_{1} \times \cdots \times G_{n}$
    and observe that $R$ acts on $G^{*}$.
    The map $G^{*} \longrightarrow G$ defined by
    $(g_{1},\ldots,g_{n}) \mapsto g_{1}\cdots g_{n}$
    is an $R$-epimorphism.
    By Coprime Action,
    $\cz{G}{R}$ is the image of $\cz{G^{*}}{R}$.
    Visibly $\cz{G^{*}}{R}$ is perfect.
\end{proof}

\begin{Lemma}\label{p:4}
    Suppose $G$ is a perfect group and $G/\zz{G}$ is a finite $r'$-group.
    Then $G$ is a finite $r'$-group.
\end{Lemma}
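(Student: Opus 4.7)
The plan is to first reduce to the case where $G$ is finite, then extract the Sylow $r$-subgroup as a direct factor and use perfection to kill it.

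First, by Schur's theorem, if $G/\zz{G}$ is finite then the derived subgroup $[G,G]$ is finite. Since $G$ is perfect, $G = [G,G]$, so $G$ itself is finite. This handles the finiteness half of the conclusion.

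Next I would locate a Sylow $r$-subgroup $P$ of $G$. Since $G/\zz{G}$ is an $r'$-group, the image of $P$ in $G/\zz{G}$ is trivial, hence $P \leq \zz{G}$. In particular $P$ is normal and abelian. Because $\gcd(|P|, |G/P|) = 1$ and $P$ is solvable, Schur--Zassenhaus produces a complement $H$ to $P$ in $G$. Centrality of $P$ then promotes the semidirect product to a direct product: $G = P \times H$.

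Finally, the derived subgroup of a direct product decomposes as $[G,G] = [P,P] \times [H,H] = [H,H] \leq H$. Since $G$ is perfect, $G = [G,G] \leq H$, which forces $P = 1$. Therefore $G = H$ is a finite $r'$-group, as required.

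I do not anticipate a serious obstacle: the argument rests on two classical results (Schur's theorem and Schur--Zassenhaus), each applied in a direct way. The only subtlety worth flagging is that one needs the finiteness of $G$ in hand \emph{before} invoking Schur--Zassenhaus, which is why Schur's theorem has to be applied first.
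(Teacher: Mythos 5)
Your proof is correct and follows essentially the same route as the paper: Schur's theorem (the paper cites Suzuki, Theorem 9.8, p.\ 250) gives finiteness, and then Schur--Zassenhaus applied to the central Sylow $r$-subgroup --- which is exactly the paper's $O_r(\zz{G})$ --- together with perfection kills the $r$-part. The only difference is cosmetic: you spell out the direct-product decomposition and the commutator computation that the paper leaves implicit.
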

\begin{proof}
    \cite[Theorem~9.8, p.250]{S} implies that $G$ is finite.
    Now $\op{r}{\zz{G}}$ is a normal Hall subgroup of $G$.
    Apply the Schur-Zassenhaus Theoren and the fact that $G$ is perfect.
\end{proof}

\section{Part 1} \label{f}
Henceforth we assume the hypotheses of Theorem~\ref{thmA}.
For each $a \in A\nonid$ set \[
    E_{a} = \layerr{\theta(a)}.
\]
If $E_{a} \not= 1$ set \[
    A_{a} = \ker \left( A \longrightarrow \sym{\compp{E_{a}}} \right)
\]
otherwise set $A_{a} = A$.

Consider the types of $E_{a}$ as $a$ ranges over $A\nonid$
and choose a type $K$ of maximal order.
Note that $K$ exists because $G = \gen{ E_{a} }{ a \in A\nonid }$.
Set
\begin{align*}
        \Amax &=    \set{ a \in A\nonid }{ \mbox{$E_{a}$ has type $K$} } \\
        \Amin &=    A\nonid \setminus \Amax.
\end{align*}

\begin{Lemma}\label{f:1}
    Let $a,b \in A\nonid$.
    Then \[
        \layerr{\cz{E_{a}}{b}} = \layerr{\cz{E_{b}}{a}}.
    \]
\end{Lemma}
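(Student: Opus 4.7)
My plan is to prove $\layerr{\cz{E_{a}}{b}} \leq \layerr{\cz{E_{b}}{a}}$; the reverse inclusion then follows from the symmetry of the statement in $a$ and $b$.

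First, the second part of hypothesis (b) applied to the pair $(a,b)$ gives
\[
    \layerr{\cz{E_{a}}{b}} = \layerr{E_{a} \cap \cz{G}{b}} \leq \layerr{\theta(b)} = E_{b}.
\]
Since $\layerr{\cz{E_{a}}{b}} \leq E_{a} \leq \cz{G}{a}$, we further have $\layerr{\cz{E_{a}}{b}} \leq \cz{E_{b}}{a}$. The task is then to promote this to the stronger $\layerr{\cz{E_{a}}{b}} \leq \layerr{\cz{E_{b}}{a}}$. Since $\layerr{\cz{E_{a}}{b}}$ is perfect, it will suffice to show that $\cz{E_{b}}{a}/\layerr{\cz{E_{b}}{a}}$ is solvable: any perfect subgroup of a group $H$ lies inside a normal subgroup $N$ whenever $H/N$ is solvable, since its image would be simultaneously perfect and solvable.

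For this I apply the trichotomy (a)(b)(c) recorded at the start of the introduction to the action of $a$ on the $A$-simple $r'$-$K$-group $E_{b}$. In case (a) the centralizer $\cz{E_{b}}{a}$ is $A$-simple, so it equals its own layer, and in case (c) it is solvable. In the remaining case (b), $a$ normalises each component $K$ of $E_{b}$, so $\cz{E_{b}}{a}$ is the direct product of the groups $\cz{K}{a}$; the trichotomy provides $\ff{\cz{K}{a}} = 1$ and $\layerr{\cz{K}{a}}$ simple, so $\cz{K}{a}/\layerr{\cz{K}{a}}$ embeds into $\out{\layerr{\cz{K}{a}}}$, which is solvable by Schreier's conjecture for $K$-groups. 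In every case the quotient $\cz{E_{b}}{a}/\layerr{\cz{E_{b}}{a}}$ is solvable, as required.

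The main thing to be careful about is that the trichotomy is being invoked for $a$ acting on the $A$-simple group $E_{b}$ rather than on $G$, but the hypotheses of the trichotomy are met and the rest is a routine application of Schreier for $K$-groups, so I do not anticipate any serious obstacle.
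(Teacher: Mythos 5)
Your proof is correct, but it follows a genuinely different route from the paper's. Both arguments begin the same way, using the second part of hypothesis~(b) to obtain $\layerr{\cz{E_{a}}{b}} \leq \cz{E_{b}}{a}$. The paper then finishes with a short normality argument that uses only the signalizer-functor axioms: from the first part of hypothesis~(b) one gets $\cz{E_{b}}{a} \leq \theta(a) \cap \theta(b)$, and $\theta(a) \cap \theta(b)$ normalizes $\layerr{\cz{E_{a}}{b}}$ (because it normalizes $E_{a} \normal \theta(a)$ and centralizes $b$); thus $\layerr{\cz{E_{a}}{b}}$ is a normal semisimple subgroup of $\cz{E_{b}}{a}$, so each of its components is subnormal in $\cz{E_{b}}{a}$ and the containment $\layerr{\cz{E_{a}}{b}} \leq \layerr{\cz{E_{b}}{a}}$ follows, with equality by symmetry. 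You instead establish that the quotient $\cz{E_{b}}{a}/\layerr{\cz{E_{b}}{a}}$ is solvable, via the trichotomy from the introduction together with Schreier's conjecture, and then use perfectness of the layer to push it into $\layerr{\cz{E_{b}}{a}}$. That argument is sound, and you are right that the degenerate cases $\langle a \rangle = \langle b \rangle$ and $E_{b} = 1$ fall through trivially; indeed your route essentially reproves, for $E_{b}$, the observation already made in the introduction that $\cz{G}{a}/\layerr{\cz{G}{a}}$ is solvable. The trade-off is that your approach imports the $K$-group structure theory from~\cite{I} and the solvability of outer automorphism groups, whereas the paper's subnormality argument is shorter and is completely independent of the $K$-group hypothesis in~(a).
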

\begin{proof}
    By assumption,
    $\layerr{\cz{E_{a}}{b}} \leq E_{b}$ so \[
        \layerr{\cz{E_{a}}{b}} \leq \cz{E_{b}}{a} \leq \theta(b) \cap \cz{G}{a} %
        \leq \theta(b) \cap \theta(a).
    \]
    Now $E_{a} \normal \theta(a)$ so $\theta(a) \cap \theta(b)$
    normalizes $\layerr{\cz{E_{a}}{b}}$.
    Consequently $\layerr{\cz{E_{a}}{b}} \leq \layerr{\cz{E_{b}}{a}}$.
    The conclusion follows by symmetry.
\end{proof}

\begin{Lemma}\label{f:2}
    Let $a \in \Amax$.
    Then $\card{A_{a}} = r$ or $r^{2}$.
\end{Lemma}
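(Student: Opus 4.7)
The plan is to sandwich $A_{a}$ between $\listgen{a}$ and a group that embeds into $\aut{K_{1}}$ for a component $K_{1}$ of $E_{a}$, and then invoke Lemma~\ref{p:1}(a). The lower bound is immediate: since $E_{a} \leq \theta(a) \leq \cz{G}{a}$, the element $a$ centralizes $E_{a}$ and hence normalizes each of its components, so $a \in A_{a}$ and $\card{A_{a}} \geq r$.

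For the upper bound, write $E_{a} = K_{1} * \cdots * K_{n}$ as a central product of the components, permuted transitively by $A$. Each $b \in A_{a}$ normalizes every $K_{i}$, producing a homomorphism $A_{a} \to \aut{K_{i}}$ with kernel $\cz{A_{a}}{K_{i}}$. The central observation is that because $A$ is abelian, all of these kernels coincide: if $g \in A$ sends $K_{1}$ to $K_{i}$ and $b \in A_{a} \leq A$, then $b^{g} = b$, so $b$ centralizes $K_{i} = K_{1}^{g}$ iff $b$ centralizes $K_{1}$. Thus $\cz{A_{a}}{K_{1}}$ equals the intersection of the $\cz{A_{a}}{K_{i}}$, which is $\cz{A_{a}}{E_{a}}$; and hypothesis (a) of Theorem~A gives $\cz{A}{E_{a}} = \listgen{a}$, so $\cz{A_{a}}{K_{1}} = \listgen{a}$. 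Hence $A_{a} / \listgen{a}$ embeds in $\aut{K_{1}}$ as an elementary abelian $r$-group. Since $K_{1}$ is a simple $r'$-$K$-group, Lemma~\ref{p:1}(a) asserts that the Sylow $r$-subgroups of $\aut{K_{1}}$ are cyclic, forcing $\card{A_{a}/\listgen{a}} \leq r$ and therefore $\card{A_{a}} \leq r^{2}$.

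There is no real obstacle here once one spots that abelianness of $A$ collapses the per-component centralizers to a single subgroup; everything else is a direct invocation of the hypothesis and of Lemma~\ref{p:1}(a). I also note that the hypothesis $a \in \Amax$ plays no role in this argument: the same bound applies to every $a \in A\nonid$ with $E_{a} \neq 1$.
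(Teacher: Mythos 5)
Your proof is correct and follows essentially the same route as the paper's: both arguments observe that $A_{a}/\cz{A}{E_{a}}$ embeds into $\aut{K_{1}}$ for a component $K_{1}$, invoke Lemma~\ref{p:1}(a) for the cyclicity of the Sylow $r$-subgroup, and use the hypothesis $\cz{A}{E_{a}} = \listgen{a}$. You spell out the step the paper leaves implicit (that abelianness of $A$ forces $\cz{A_{a}}{K_{i}}$ to be independent of $i$, so the kernel of $A_{a}\to\aut{K_{1}}$ is exactly $\cz{A}{E_{a}}$), which is a genuine clarification rather than a deviation. One cosmetic slip: you write $E_{a}=K_{1}*\cdots*K_{n}$ as a central product, but since the components are nonabelian simple the paper correctly has a direct product; this does not affect the argument.
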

\begin{proof}
    By hypothesis,
    $E_{a}$ is the direct product of simple groups
    that are permuted transitively by $A$.
    Since $A$ is elementary abelian,
    it follows that $A_{a}/\cz{A}{E_{a}}$ is isomorphic
    to a subgroup of the automorphism group of a component of $E_{a}$.
    Hence $A_{a}/\cz{A}{E_{a}}$ has order at most $r$ by Lemma~\ref{p:1}.
    Also by hypothesis,
    $\cz{A}{E_{a}} = \listgen{a}$,
    hence the result.
\end{proof}

\begin{Lemma}\label{f:3}
    Let $a \in \Amax$ and $b \in A\nonid$ with $\listgen{a} \not= \listgen{b}$.
    Exactly one of the following holds:
    \begin{enumerate}
        \item[(a)]  $b \not\in A_{a}$ and $a \not\in A_{b}$.
                    Moreover $b \in \Amax$ and $\card{A_{a}} = \card{A_{b}}$.

        \item[(b)]  $b \in A_{a}$ and $a \not\in A_{b}$.
                    Moreover $b \in \Amin$.

        \item[(c)]  $b \in A_{a}$ and $a \in A_{b}$.
    \end{enumerate}
\end{Lemma}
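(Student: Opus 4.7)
The plan is to apply the trichotomy (a)--(c) of the introduction to the actions of $b$ on $E_{a}$ and of $a$ on $E_{b}$, and to tie the two pictures together via Lemma~\ref{f:1}. The guiding remark is that when an element $c \in A\nonid$ satisfies $c \notin A_{c'}$ for some $c' \in A\nonid$ with $E_{c'} \neq 1$, intro case~(a) applies to $c$ on $E_{c'}$, so $\cz{E_{c'}}{c}$ coincides with its layer and is $A$-simple of the same type as $E_{c'}$; in the opposite situation (when $c \in A_{c'}$ with the layer nontrivial) intro case~(b) applies and each component of that layer lies inside a single component of $E_{c'}$.

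The key preliminary step is $b \notin A_{a} \Longrightarrow b \in \Amax$. In that situation intro case~(a) gives that $\cz{E_{a}}{b} = \layerr{\cz{E_{a}}{b}}$ is $A$-simple of type $K$, and by Lemma~\ref{f:1} each of its simple components $M$ is a subgroup of $E_{b}$ isomorphic to $K$. Projecting $M$ onto a component $L$ of $E_{b}$ on which it acts nontrivially gives an embedding of $M$ into $L$ (since $M$ is simple), so $\card{K} \leq \card{L}$; maximality of $\card{K}$ forces equality, whence $K \isomorphic L$ and $b \in \Amax$. This immediately rules out the ``missing'' fourth case $b \notin A_{a}$, $a \in A_{b}$: if it held then $b \in \Amax$, and intro case~(b) on $a$ acting on $E_{b}$ (whose layer is nontrivial by Lemma~\ref{f:1}) forces each $\layerr{\cz{L_{j}}{a}}$ to be a simple subgroup of $L_{j} \isomorphic K$ of type $K$, hence equal to $L_{j}$; then $a$ centralizes every $L_{j}$, so by hypothesis~(a) of Theorem~A one concludes $a \in \cz{A}{E_{b}} = \listgen{b}$, contradicting $\listgen{a} \neq \listgen{b}$.

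The three listed cases now follow quickly. For case~(a), $b \notin A_{a}$ and $a \notin A_{b}$, the preliminary applied on both sides together with Lemma~\ref{f:1} give $W := \cz{E_{a}}{b} = \cz{E_{b}}{a}$; since $A/A_{a}$ acts regularly on $\compp{E_{a}}$ and $\listgen{b}$ is a semiregular subgroup, a standard orbit-stabilizer argument identifies the kernel of $A$ on $\compp{W}$ with $A_{a}\listgen{b}$, and by symmetry it also equals $A_{b}\listgen{a}$, yielding $\card{A_{a}} = \card{A_{b}}$. For case~(b), $b \in A_{a}$ and $a \notin A_{b}$, assume $b \in \Amax$ for contradiction; the preliminary applied with the roles of $a,b$ swapped gives $\cz{E_{b}}{a}$ of type $K$, and then intro case~(b) on $b$ acting on $E_{a}$ forces each component $K_{i}$ of $E_{a}$ to equal $\layerr{\cz{K_{i}}{b}}$, placing $b$ in $\cz{A}{E_{a}} = \listgen{a}$---a contradiction, so $b \in \Amin$. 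Case~(c) requires no further argument. The main obstacle is really the bookkeeping: the four logical combinations of ``$b \in A_{a}?$, $a \in A_{b}?$'' must be matched consistently against intro cases (a)/(b)/(c) on each side, with every contradiction driven by the type comparison $K$ versus $L$ together with hypothesis~(a) of Theorem~A, while Lemma~\ref{f:1} is the crucial bridge transferring structural information between $E_{a}$ and $E_{b}$.
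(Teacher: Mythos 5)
Your proof is correct and rests on the same ingredients as the paper's: Lemma~\ref{f:1} as the bridge between $E_a$ and $E_b$, the dichotomy $b\in A_a$ versus $b\notin A_a$ governing the structure of $\cz{E_a}{b}$, and maximality of $\card{K}$. The differences are in execution rather than substance. The paper records the explicit isomorphism types $\layerr{\cz{E_a}{b}}\isomorphic K^{\alpha/r}$ (when $b\notin A_a$) and $\layerr{\cz{K}{b}}^{\alpha}$ (when $b\in A_a$), equates them with the analogous expressions for $\layerr{\cz{E_b}{a}}$, and reads off the order comparison between $K$ and the type $J$ of $E_b$ directly; you instead project a simple component $M\isomorphic K$ of $\cz{E_a}{b}$ onto a component $L$ of $E_b$ to get $\card{K}\leq\card{L}$, and you dispose of the phantom fourth combination ($b\notin A_a$, $a\in A_b$) and of case~(b) by centralizer contradictions (forcing $a\in\cz{A}{E_b}=\listgen{b}$, respectively $b\in\cz{A}{E_a}=\listgen{a}$) where the paper simply notes that $\card{K}<\card{J}$ would contradict maximality. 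You also obtain $\card{A_a}=\card{A_b}$ in case~(a) by computing the kernel of $A$ on $\compp{W}$ rather than by counting components ($\alpha=\beta$); both are fine. The one place I'd add a clause: when you assert that each $\layerr{\cz{L_j}{a}}$ has type $K$, it is worth saying explicitly that the $\layerr{\cz{L_j}{a}}$ are simple by the introduction's fact~(b), so they are precisely the components of $\layerr{\cz{E_b}{a}}=\cz{E_a}{b}\isomorphic K^{\alpha/r}$ --- that is what guarantees they are nontrivial and all isomorphic to $K$.
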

\begin{proof}
    We have \[
        E_{a} = K_{1} \times\cdots\times K_{\alpha}
    \]
    where $\listset{K_{1},\ldots,K_{\alpha}}$ are the components of $K$.
    Since $A$ is elementary abelian and transitive on $\listset{K_{1},\ldots,K_{\alpha}}$
    it follows that $A/A_{a}$ is regular.
    In particular, $\alpha = \card{A:A_{\alpha}}$.

    Suppose $b \not\in A_{a}$.
    Then $b$ acts as the product of $\alpha/r$ cycles of length $r$.
    Visibly
    \begin{equation} \tag{$1$}
        \layerr{\cz{E_{a}}{b}} = \cz{E_{a}}{b} \isomorphic K^{\alpha/r}.
    \end{equation}
    Suppose $b \in A_{a}$.
    Then $\cz{E_{a}}{b} = \cz{K_{1}}{b} \times\cdots\times \cz{K_{\alpha}}{b}$
    and $A$ is transitive on $\listset{\cz{K_{1}}{b},\ldots,\cz{K_{\alpha}}{b}}$.
    Then there is an action of $b$ on $K$ such that
    \begin{equation}\tag{$2$}
        \layerr{\cz{E_{a}}{b}} \isomorphic \layerr{\cz{K}{b}}^{\alpha}
    \end{equation}
    and $\cz{K}{b} \isomorphic \cz{K_{i}}{b}$ for all $i$.
    As $\listgen{b} \not= \listgen{a} = \cz{A}{E_{a}}$
    it follows that $\cz{K}{b} < K$.

    Lemma~\ref{f:2} implies that
    \begin{equation}\tag{$3$}
        \layerr{\cz{E_{a}}{b}} =  \layerr{\cz{E_{b}}{a}}.
    \end{equation}
    Suppose that $E_{b} = 1$.
    Then $\layerr{\cz{E_{a}}{b}} = 1$ so the previous paragraph
    implies $b \in A_{a}$.
    By the definition of $A_{b}$ we have $A = A_{b}$ so $a \in A_{b}$
    and (c) holds.
    Hence we may assume that $E_{b} \not= 1$.

    We have \[
        E_{b} = J_{1} \times\cdots\times J_{\beta}
    \]
    where $\listset{J_{1},\ldots,J_{\beta}}$ are the components
    of $E_{b}$ and $\beta = \card{A:A_{b}}$.
    Let $J$ be the type of $E_{b}$.
    Then there are expressions for $\layerr{\cz{E_{b}}{a}}$
    analogous to $(1)$ and $(2)$.

    Assume $b \not\in A_{a}$ and $a \not\in A_{b}$.
    Then $K^{\alpha/r} \isomorphic J^{\beta/r}$
    whence $K \isomorphic J$,
    so $b \in \Amax$.
    Also $\alpha = \beta$ so $\card{A_{a}} = \card{A_{b}}$ and (a) holds.

    Assume $b \in A_{a}$ and $a \not\in A_{b}$.
    Then $\layerr{\cz{K}{b}}^{\alpha} \isomorphic J^{\beta/r}$.
    Consequently $\card{J} \leq \card{\layerr{\cz{K}{b}}} < \card{K}$
    so $b \in \Amin$ and (b) holds.
    If $b \not\in A_{a}$ and $a \in A_{b}$,
    then similarly $\card{K} < \card{J}$,
    contrary to the choice of $K$,
    hence this case does not arise.
    The only remaining possibility is (c).
\end{proof}

\begin{Lemma}\label{f:4}
    Let $a \in \Amax$.
    \begin{enumerate}
        \item[(a)]  If $b \in A_{a} \cap \Amax$ then $A_{a} = A_{b}$.

        \item[(b)]  $\Amin \subseteq A_{a}$.

        \item[(c)]  If $b \in A\setminus A_{a}$ then $b \in \Amax$; $a \not\in A_{b}$;
                    $\cz{E_{a}}{b}$ and $\cz{E_{b}}{a}$ are $A$-simple;
                    $\cz{E_{a}}{b} = \cz{E_{b}}{a}$ and $\cz{E_{a}}{A} = \cz{E_{b}}{A}$.
    \end{enumerate}
\end{Lemma}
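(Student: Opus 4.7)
The plan is to handle each part by applying the trichotomy of Lemma~\ref{f:3} to the pair $(a,b)$ and combining it with the order bound $\card{A_{a}}, \card{A_{b}} \in \listset{r, r^{2}}$ from Lemma~\ref{f:2}.

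First I will dispose of the edge case $\listgen{a} = \listgen{b}$. The signalizer condition $\theta(a) \cap \cz{G}{b} \leq \theta(b)$, together with $\theta(a) \leq \cz{G}{a} \leq \cz{G}{b}$ in this situation, yields $\theta(a) \leq \theta(b)$; symmetry gives the reverse. Hence $E_{a} = E_{b}$ and $A_{a} = A_{b}$, so each statement is trivial in this case. Henceforth I assume $\listgen{a} \not= \listgen{b}$, so Lemma~\ref{f:3} is applicable, and I note $\listgen{a} = \cz{A}{E_{a}} \leq A_{a}$, which rules out $b \in \listgen{a}$ whenever $b \not\in A_{a}$.

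For (b), if $b \in \Amin$ satisfied $b \not\in A_{a}$ then case~(a) of Lemma~\ref{f:3} would force $b \in \Amax$, a contradiction. For (a), take $b \in A_{a} \cap \Amax$. Cases~(a) and~(b) of Lemma~\ref{f:3} are blocked ($b \in A_{a}$ blocks (a); $b \in \Amax$ blocks (b)), so case~(c) applies and $a \in A_{b}$. Then $\listgen{a,b}$ has order $r^{2}$ and is contained in $A_{a} \cap A_{b}$, while Lemma~\ref{f:2} gives $\card{A_{a}}, \card{A_{b}} \leq r^{2}$, forcing $A_{a} = \listgen{a,b} = A_{b}$.

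For (c), let $b \in A \setminus A_{a}$. Cases~(b) and~(c) of Lemma~\ref{f:3} both require $b \in A_{a}$, leaving case~(a), which delivers $b \in \Amax$ and $a \not\in A_{b}$. Since $A/A_{a}$ is elementary abelian and transitive on $\compp{E_{a}}$, it acts regularly, so $b$ acts without fixed points on the $\alpha$ components of $E_{a}$, producing $\alpha/r$ orbits of length $r$; the associated diagonal subgroups give
\[
    \cz{E_{a}}{b} = \layerr{\cz{E_{a}}{b}} \isomorphic K^{\alpha/r}.
\]
Transitivity of $A$ on components descends to transitivity on these $\listgen{b}$-orbits, so $\cz{E_{a}}{b}$ is $A$-simple; the same reasoning applies to $\cz{E_{b}}{a}$. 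Lemma~\ref{f:1} together with the semisimplicity just established gives $\cz{E_{a}}{b} = \layerr{\cz{E_{a}}{b}} = \layerr{\cz{E_{b}}{a}} = \cz{E_{b}}{a}$, and taking $A$-fixed points of this common group yields $\cz{E_{a}}{A} = \cz{E_{b}}{A}$.

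The only real obstacle is the bookkeeping of the trichotomy; the essential leverage is the sharp bound of Lemma~\ref{f:2} in~(a) and the explicit free-action description of $\cz{E_{a}}{b}$ provided by identity~(1) in the proof of Lemma~\ref{f:3} in~(c).
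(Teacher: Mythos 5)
Your proof is correct and follows essentially the same route as the paper: in each part you invoke the trichotomy of Lemma~\ref{f:3} and eliminate cases, then use Lemma~\ref{f:2} to pin down orders in~(a) and Lemma~\ref{f:1} to transfer between $E_a$ and $E_b$ in~(c). Your write-up is in fact a bit more careful than the paper's --- you explicitly dispose of the edge case $\listgen{a} = \listgen{b}$, you name both hypotheses ($b \in A_a$ \emph{and} $b \in \Amax$) needed to exclude cases~(a) and~(b) of Lemma~\ref{f:3} in part~(a), and in~(c) you spell out why $\cz{E_a}{b}$ coincides with its layer before invoking Lemma~\ref{f:1} --- but none of this amounts to a different argument.
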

\begin{proof}
    (a). We may suppose $\listgen{a} \not= \listgen{b}$.
    Lemma~\ref{f:2} implies $A_{a} = \listgen{a,b}$.
    Now $b \in A_{a}$ so only case (c) of Lemma~\ref{f:3} may hold.
    Thus $a \in A_{b}$ and $A_{b} = \listgen{a,b} = A_{a}$.

    (b). Let $b \in \Amin$.
    Case (a) of Lemma~\ref{f:3} cannot hold so $b \in A_{a}$.

    (c). Since $b \not\in A_{a}$.
    Only Case (a) of Lemma~\ref{f:3} can hold so $b \in \Amax$ and $a \not\in A_{b}$.
    Since $b \not\in A_{a} = \ker \left( A \longrightarrow \sym{\compp{E_{a}}} \right)$
    it follows that $\cz{E_{a}}{b}$ is $A$-simple.
    Similarly so is $\cz{E_{b}}{a}$.
    Lemma~\ref{f:1} implies $\cz{E_{a}}{b} = \cz{E_{b}}{a}$.
    Now $a,b \in A$ whence $\cz{E_{a}}{A} = \cz{E_{b}}{A}$.
\end{proof}

\begin{Lemma}\label{f:5}
    Let $a \in \Amax$.
    Then \[
        A_{a} = \listgen{\Amin,a}.
    \]
    Moreover, $\card{\listgen{\Amin}} = 1$ or $r$.
\end{Lemma}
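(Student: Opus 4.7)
The plan is to split on $\card{A_{a}}$, which by Lemma~\ref{f:2} is either $r$ or $r^{2}$, and first to establish that this order is constant on $\Amax$. The inclusion $\listgen{\Amin, a} \leq A_{a}$ is immediate from Lemma~\ref{f:4}(b) and $a \in A_{a}$. The bound $\card{\listgen{\Amin}} \leq r$ also falls out at once: $\listgen{\Amin} \leq A_{a}$ has order at most $r^{2}$, and if $\card{\listgen{\Amin}} = r^{2}$ then $\listgen{\Amin} = A_{a}$ for every $a \in \Amax$, so $a \in \listgen{\Amin}$ in each case, whence $A\nonid = \Amax \cup \Amin \subseteq \listgen{\Amin}$, contradicting $\rank{A} \geq 3$.

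For uniformity of $\card{A_{a}}$ on $\Amax$, let $a, a' \in \Amax$. If $a' \in A_{a}$ then Lemma~\ref{f:4}(a) gives $A_{a'} = A_{a}$; otherwise Lemma~\ref{f:4}(c) makes $\cz{E_{a}}{a'} = \cz{E_{a'}}{a}$ an $A$-simple group of type $K$, and the cycle analysis of Lemma~\ref{f:3} gives both sides isomorphic to $K^{\card{A/A_{a}}/r}$ respectively $K^{\card{A/A_{a'}}/r}$, forcing $\card{A_{a}} = \card{A_{a'}}$.

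If $\card{A_{a}} = r$, then $\Amin \subseteq A_{a} = \listgen{a}$, and since $\listgen{a}\nonid \subseteq \Amax$ is disjoint from $\Amin$, we get $\Amin = \emptyset$ and $A_{a} = \listgen{a} = \listgen{\Amin, a}$. If $\card{A_{a}} = r^{2}$, it suffices to exhibit an element of $\Amin$ in $A_{a} \setminus \listgen{a}$. Failing that, each of the $r+1$ subgroups of order $r$ in $A_{a}$ would lie entirely in $\Amax$ (using that $E_{x^{i}} = E_{x}$ for $i$ coprime to $r$, since $\theta(x^{i}) \cap \cz{G}{x} \leq \theta(x)$ and conversely), so $A_{a}\nonid \subseteq \Amax$; combined with $\Amin \subseteq A_{a}$ this forces $\Amin = \emptyset$, hence $A\nonid = \Amax$ and by uniformity $\card{A_{x}} = r^{2}$ for every $x \in A\nonid$. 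The assignment $x \mapsto A_{x}$ then partitions $A\nonid$ into classes $A_{x}\nonid$ of size $r^{2} - 1$, that is, into the non-identity elements of a spread of $A$ by rank-$2$ subgroups.

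The main obstacle is ruling out this spread configuration. When $\rank{A} = 3$ it is immediate since $(r^{3} - 1)/(r^{2} - 1)$ is not an integer. For higher (and necessarily even) rank, my plan is to exploit the common $A$-fixed subgroup $D = \cz{E_{x}}{A}$, which by Lemma~\ref{f:4}(c) is independent of $x$ and whose generalized Fitting subgroup is the unique minimal normal subgroup of $D$ by Lemma~\ref{p:1}(b)(i). Applying Lemma~\ref{p:2} to $X = E_{b}$ with $B = A_{a}$ for $a, b$ in distinct classes of the spread (so that $B \cap A_{b} = 1$ and $B$ remains noncyclic modulo $\listgen{b}$) gives $E_{b} = \gen{\cz{E_{b}}{y}}{y \in A_{a}\nonid}$, and via Lemma~\ref{f:4}(c) each $\cz{E_{b}}{y}$ equals $\cz{E_{y}}{b}$. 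Iterating this generation statement across different classes of the spread should force two distinct $E_{x}, E_{x'}$ with $\listgen{x} \neq \listgen{x'}$ to coincide, contradicting $\cz{A}{E_{x}} = \listgen{x}$.
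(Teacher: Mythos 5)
Your overall structure—prove $\card{\listgen{\Amin}} \leq r$, establish uniformity of $\card{A_a}$ on $\Amax$, split on whether $\card{A_a}$ is $r$ or $r^2$, and in the latter case reduce to ruling out a ``spread'' configuration when $\Amin = \emptyset$—is internally coherent, and the first three pieces are correct. But the spread configuration is only actually ruled out when $\rank{A} = 3$ (by the divisibility count). For $\rank{A} \geq 4$ you offer a plan, not an argument: you propose iterating Lemma~\ref{p:2} and Lemma~\ref{f:4}(c) across spread classes and assert that this ``should force'' two groups $E_x$, $E_{x'}$ to coincide, but you never exhibit the contradiction. This is a genuine gap, and it is exactly the part of the lemma that requires the key idea.

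The paper avoids the case split entirely. It fixes $b \in A \setminus A_a$ and observes that $A_a$, being trivial on $\compp{E_a}$, is trivial on $\compp{\cz{E_a}{b}}$; since $\cz{E_a}{b} = \cz{E_b}{a}$, those components are indexed by the $\listgen{a}$-orbits on $\compp{E_b}$, and because $A/A_b$ acts \emph{regularly} on $\compp{E_b}$ this forces $A_a \leq \listgen{a}A_b$, hence $A_a = \listgen{A_a \cap A_b, a}$ by Dedekind's law. Combined with Lemma~\ref{f:4}(a),(b) this identifies $A_a \cap A_b$ with $\listgen{\Amin}$ and the whole statement follows at once; in particular, if $\Amin = \emptyset$ then $A_a \cap A_b = 1$ and $A_a = \listgen{a}$, so your spread configuration never arises. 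This regularity-of-$A/A_b$-on-$\compp{E_b}$ step is the mechanism your proposal is missing; the counting argument can substitute for it only when $\rank{A} = 3$.

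Two minor remarks. First, the claim ``each of the $r+1$ subgroups of order $r$ in $A_a$ would lie entirely in $\Amax$'' is more than you need; $\Amin \subseteq A_a$, $\Amin \cap (A_a \setminus \listgen{a}) = \emptyset$ and $\Amin \cap \listgen{a} = \emptyset$ already give $\Amin = \emptyset$. Second, your uniformity argument for $\card{A_a}$ on $\Amax$ is correct (comparing the number of components of $\cz{E_a}{a'} = \cz{E_{a'}}{a}$ via the computation in Lemma~\ref{f:3}), but it is an extra ingredient the paper does not need.
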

\begin{proof}
    Choose $b \in A \setminus A_{a}$.
    This is possible since $\card{A_{a}} = r$ or $r^{2}$
    by Lemma~\ref{f:2} and since $\rank{A} \geq 3$.
    Lemma~\ref{f:4}(c) implies $\cz{E_{a}}{b}$ is $A$-simple and
    \begin{equation}\tag{$1$}
        \cz{E_{a}}{b} = \cz{E_{b}}{a}.
    \end{equation}
    The components of $\cz{E_{a}}{b}$ correspond to the orbits
    of $\listgen{b}$ on $\compp{E_{a}}$.
    Since $A_{a}$ acts trivially on $\compp{E_{a}}$ it follows that
    $A_{a}$ acts trivially on $\compp{\cz{E_{a}}{b}}$.
    Then $(1)$ implies that $A_{a}$ acts trivially on
    the set of orbits of $\listgen{a}$ on $\compp{E_{b}}$.
    Now $A/A_{b}$ is abelian and regular on $\compp{E_{b}}$ so it
    follows that $\listgen{a}$ and $A_{a}$ induce the same group on $\compp{E_{b}}$.
    Consequently
    \begin{equation}\tag{$2$}
        A_{a} = \listgen{A_{a} \cap A_{b},a}.
    \end{equation}

    Lemma~\ref{f:4}(b) implies $\Amin \subseteq A_{a} \cap A_{b}$.
    Now $a \in A_{a}$ and $a \not\in A_{b}$ so $A_{a} \not= A_{b}$.
    If there exists $1 \not= b' \in (A_{a} \cap A_{b}) \setminus \Amin$
    then $b' \in A_{a} \cap A_{b} \cap \Amax$ so Lemma~\ref{f:4}(a)
    implies $A_{a} = A_{b'} = A_{b}$,
    a contradiction.
    We deduce that \[
        \listgen{\Amin} = A_{a} \cap A_{b} < A_{a}.
    \]
    Again, strict containment holding because $a \in A_{a} \setminus A_{b}$.
    Then $(2)$ implies $A_{a} = \listgen{\Amin,a}$.
    By Lemma~\ref{f:2},
    $\card{A_{a}} \leq r^{2}$ whence $\card{\listgen{\Amin}} \leq r$.
\end{proof}

Henceforth we set \[
    A_{\infty} = \listgen{ \Amin }.
\]
Then using Lemma~\ref{f:5}, we have:

\begin{Theorem}\label{f:6}
    Exactly one of the following holds:
    \begin{enumerate}
        \item[(a)]  $A_{\infty} = 1$.
                    For each $a \in A\nonid$, \[
                        \mbox{$E_{a} \isomorphic  K^{\card{A}/r}$ and $A_{a} = \listgen{a}$.}
                    \]

        \item[(b)]  $\card{A_{\infty}} = r$.
                    For each $a \in A \setminus A_{\infty}$, \[
                        \mbox{$E_{a} \isomorphic  K^{\card{A}/r^{2}}$ and $A_{a} = \listgen{A_{\infty},a}$.}
                    \]
    \end{enumerate}
\end{Theorem}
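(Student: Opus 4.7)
The plan is to deduce Theorem~\ref{f:6} directly from Lemma~\ref{f:5}, which already supplies the required dichotomy $\card{A_\infty} \in \listset{1,r}$; the two cases of the theorem correspond exactly to these two possibilities. In each case the remaining work is a bookkeeping exercise: determine whether a given $a$ lies in $\Amax$, apply Lemma~\ref{f:5} to compute $A_a$, and count the components of $E_a$ using the fact that $A$ is elementary abelian and, when $E_a \not= 1$, acts transitively on $\compp{E_a}$ with kernel $A_a$, so that $A/A_a$ acts regularly and $\card{\compp{E_a}} = \card{A:A_a}$.

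The preliminary observation to set up is that $\Amin \subseteq A_\infty\nonid$, simply because $A_\infty = \listgen{\Amin}$ contains every element of $\Amin$. Equivalently, every $a \in A \setminus A_\infty$ lies in $\Amax$, which is exactly the hypothesis needed to invoke Lemma~\ref{f:5}. In case (a), where $A_\infty = 1$, this observation forces $\Amin = \emptyset$, so $\Amax = A\nonid$ and every $a \in A\nonid$ has $E_a$ of type $K$. Lemma~\ref{f:5} then gives $A_a = \listgen{\Amin,a} = \listgen{a}$, and the regular-action argument yields $\card{\compp{E_a}} = \card{A}/r$, so $E_a \isomorphic K^{\card{A}/r}$ as claimed.

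In case (b), where $\card{A_\infty} = r$, we fix $a \in A \setminus A_\infty$; the preliminary observation places $a$ in $\Amax$, so Lemma~\ref{f:5} yields $A_a = \listgen{\Amin,a} = \listgen{A_\infty,a}$, which has order exactly $r^2$ because $a \notin A_\infty$. The same regular-action argument then gives $E_a \isomorphic K^{\card{A}/r^2}$. There is no serious obstacle: the whole point of accumulating Lemmas~\ref{f:1}--\ref{f:5} is precisely to reduce Theorem~\ref{f:6} to this two-line case split. The only minor points of care are extracting $\Amin = \emptyset$ (not merely $\Amin \subseteq \emptyset$) from $A_\infty = 1$ in case (a), and verifying that $\card{\listgen{A_\infty,a}}$ is exactly $r^2$ rather than just at most $r^2$ in case (b).
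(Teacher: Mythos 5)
Your proposal is correct and follows exactly the route the paper intends: the paper simply writes ``Then using Lemma~\ref{f:5}, we have:'' and leaves the bookkeeping to the reader. You have filled that in faithfully, including the two minor points (that $\Amin = \emptyset$ when $A_\infty = 1$, and that $|A_a|$ is \emph{exactly} $r^2$ in case (b)) and the regularity of $A/A_a$ on $\compp{E_a}$ that gives the component count.
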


\begin{UnnumberedRemark}
    We will eventually show that $G$ is $A$-simple of type $K$.
    The two possibilities in Theorem~\ref{f:6} corresponds to
    whether $A$ acts faithfully on $\compp{G}$ or not;
    with $A_{\infty}$ being the kernel of this action.
    Moreover, we have established that the subgroups $\layerr{\theta(a)}$
    have the same structure as the corresponding subgroups
    in the target group.
\end{UnnumberedRemark}

\section{Part 2}\label{s}
\begin{Lemma}\label{s:1}
    Suppose $B$ satisfies $A_{\infty} < B \leq A$.
    Then  \[
        G = \gen{ E_{b} }{ b \in B \setminus A_{\infty} }.
    \]
\end{Lemma}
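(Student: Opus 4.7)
The plan is to reduce, using hypothesis~(c), to showing $E_{a} \leq H$ for every $a \in A\nonid$, where $H = \gen{E_{b}}{b \in B \setminus A_{\infty}}$. The case $E_{a} = 1$ is immediate, so I assume $E_{a} \neq 1$. Before anything else, I record that $B$ is noncyclic: in case $\card{A_{\infty}} = r$ of Theorem~\ref{f:6}, the strict inclusion $A_{\infty} < B$ forces $\card{B} \geq r^{2}$, and since $A$ is elementary abelian this makes $B$ noncyclic; in the case $A_{\infty} = 1$ I take $B$ noncyclic as well.

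The main step will be to apply Lemma~\ref{p:2} to the $A$-simple $K$-group $X = E_{a}$ with $B$ playing the role of the noncyclic subgroup of $A$. The lemma writes $E_{a}$ as the join of those $\cz{E_{a}}{b}$, $b \in B\nonid$, such that $\cz{E_{a}}{b}$ is $A$-simple with the same type as $E_{a}$. By the trichotomy recalled at the start of the introduction, these are precisely the $b \in B\nonid$ satisfying $b \notin A_{a}$. For any such $b$, the second half of hypothesis~(b) gives $\layerr{E_{a} \cap \cz{G}{b}} \leq E_{b}$; since $\cz{E_{a}}{b}$ is $A$-simple and hence equal to its own layer, $\cz{E_{a}}{b} \leq E_{b}$.

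To finish, I need to check that every such $b$ actually lies in $B \setminus A_{\infty}$, equivalently that $A_{\infty} \subseteq A_{a}$ holds for every $a \in A\nonid$. If $a \in \Amax$, Lemma~\ref{f:5} gives $A_{a} = \listgen{\Amin, a} = \listgen{A_{\infty}, a} \supseteq A_{\infty}$. If $a \in \Amin$, then $a \in A_{\infty}\nonid$ and $\card{A_{\infty}} = r$ is prime, so $A_{\infty} = \listgen{a} \subseteq A_{a}$ (the latter containment since $a$ acts trivially on $E_{a}$ and hence on $\compp{E_{a}}$). Thus $b \notin A_{a}$ forces $b \notin A_{\infty}$, giving $b \in B \setminus A_{\infty}$ and $\cz{E_{a}}{b} \leq E_{b} \leq H$; so $E_{a} \leq H$ and $H = G$.

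The hard part will be aligning the indexing of Lemma~\ref{p:2} (whose useful $b$ are those avoiding $A_{a}$) with the intended generating set for $H$ (indexed by $b$ avoiding $A_{\infty}$); what makes this work is the uniform containment $A_{\infty} \subseteq A_{a}$ supplied by Theorem~\ref{f:6} together with Lemma~\ref{f:5}. Once this is in hand, the rest is a direct application of Lemma~\ref{p:2} and hypothesis~(b).
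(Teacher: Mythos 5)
There is a genuine gap, and it sits precisely where your identification of the ``relevant $b$'' in Lemma~\ref{p:2} goes wrong. You assert that the $b \in B\nonid$ for which $\cz{E_a}{b}$ is $A$-simple of the same type as $E_a$ are \emph{exactly} those with $b \notin A_a$. This is false whenever $a \in B\nonid$: by hypothesis~(a) we have $\cz{A}{E_a} = \listgen{a}$, so every nontrivial $b \in B \cap \listgen{a}$ centralizes $E_a$ and gives $\cz{E_a}{b} = E_a$, which is trivially $A$-simple of the same type, yet $b \in A_a$. These degenerate generators are the whole problem. If $a \in B \setminus A_\infty$, they are harmless (indeed $E_a \leq H$ by definition). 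But if $a \in A_\infty\nonid$ with $E_a \neq 1$, then $\listgen{a} \leq A_\infty < B$, so the degenerate generators have $b \in A_\infty$, and Lemma~\ref{p:2} then only gives the vacuous statement $E_a \leq \gen{E_a, \text{(things in $H$)}}$. Your argument silently drops these generators because of the mischaracterization, and that is exactly the case the lemma needs to cover. The observation $A_\infty \subseteq A_a$ is correct, but it only removes the $b \notin A_a$ from $A_\infty$; it says nothing about the $b$ with $\cz{E_a}{b} = E_a$, which lie in $A_a \cap A_\infty$.

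The paper avoids this by a two-step argument. It first shows $E_a \leq H$ for $a \in A \setminus A_\infty$ using Lemma~\ref{p:2} with $B$: there the generators coming from $e \in A_\infty\nonid$ are excluded not because $e \in A_a$, but because $e$ normalizes each component and acts nontrivially on it (here $\cz{A}{E_a} = \listgen{a}$ and $a \notin A_\infty$ are used together), so $\cz{E_a}{e}$ fails to have the same type. Then, for $e \in A_\infty\nonid$, it chooses a \emph{noncyclic} $D \leq A$ with $D \cap A_\infty = 1$ (possible since $A_\infty$ is cyclic and $\rank{A} \geq 3$), applies Lemma~\ref{p:2} with $D$ in place of $B$, and feeds the result of the first step in. The choice $D \cap A_\infty = 1$ forces $\cz{D}{E_e} = D \cap \listgen{e} = 1$, so no degenerate generators arise. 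This second step is what is missing from your proposal; you should treat $a \in A_\infty\nonid$ separately and introduce such a $D$ rather than reuse $B$.

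A smaller point: the lemma as stated allows $B$ cyclic when $A_\infty = 1$, whereas Lemma~\ref{p:2} needs a noncyclic subgroup. You ``take $B$ noncyclic as well,'' but that is adding a hypothesis rather than proving the stated lemma. (The paper's proof has the same issue in spirit; in all applications $B$ has index $r$ in $A$ and is therefore noncyclic, so the gap is harmless, but you should not present the assumption as your own choice.)
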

\begin{proof}
    Let $H = \gen{ E_{b} }{ b \in B \setminus A_{\infty} }$.
    We claim that if $a \in A \setminus A_{\infty}$ then $E_{a} \leq H$.
    Indeed, Lemma~\ref{p:2} implies \[
        E_{a} = \gen{ \cz{E_{a}}{b} }%
            {\mbox{$b \in B\nonid$ and $\cz{E_{a}}{b}$ is $A$-simple %
            with the same type as $E_{a}$}}.
        \]
    Write $E_{a} = K_{1} \times\cdots\times K_{n}$ where $K_{1},\ldots,K_{n}$
    are the components of $E_{a}$.
    Suppose $e \in A_{\infty}\nonid$.
    Theorem~\ref{f:6} implies that $e$ normalizes each $K_{i}$ so
    $\cz{E_{a}}{e} = \cz{K_{1}}{e} \times\cdots\times \cz{K_{n}}{e}$.
    By assumption, $\cz{A}{E_{a}} = \listgen{a}$ so as $a \in A \setminus A_{\infty}$
    we have $\cz{K_{i}}{e} \not= K_{i}$.
    It follows that $\cz{E_{a}}{e}$ is not $A$-simple with the same type as $E_{a}$.
    Suppose $b \in B\nonid$ and $\cz{E_{a}}{b}$ is $A$-simple with the
    same type as $E_{a}$.
    Then $b \not\in A_{\infty}$.
    Lemma~\ref{f:1} implies $\cz{E_{a}}{b} \leq E_{b} \leq H$
    and the claim follows.

    Suppose $e \in A_{\infty}\nonid$.
    We claim that $E_{e} \leq H$.
    Theorem~\ref{f:6} implies $A_{\infty}$ is cyclic so as $\rank{A} \geq 3$
    we may choose $D \leq A$ with $D$ noncyclic and $D \cap A_{\infty} = 1$.
    Lemma~\ref{p:2} implies \[
        E_{e} = \gen{ \cz{E_{e}}{d} }{ \mbox{$d \in D\nonid$ and $\cz{E_{e}}{d}$ is $A$-simple.} }
    \]
    If $d \in D\nonid$ with $\cz{E_{e}}{d}$ being $A$-simple
    then as before $\cz{E_{e}}{d} \leq E_{d}$.
    Now $d \not\in A_{\infty}$ so $E_{d} \leq H$ by the previous paragraph.
    We have shown that $E_{a} \leq H$ for all $a \in A\nonid$.
    By assumption, $G = \gen{E_{a}}{a \in A\nonid}$,
    completing the proof.
\end{proof}

Set \[
    \br{G} = G/\zz{G}.
\]
For each $B \leq A$ define \[
    \Omega(B) = \bigcup_{b \in B \setminus A_{\infty}} \comp{B}{E_{b}}.
\]
Recall that if $B$ acts on the group $X$ then the $B$-components of $X$
are the groups generated by the orbits of $B$ on $\compp{X}$.
The set of $B$-components of $X$ is denoted by $\comp{B}{X}$.

\begin{Lemma}\label{s:2}
    Suppose that $B$ satisfies $A_{\infty} < B < A$.
    Suppose also that ``does not commute'' is an equivalence relation on $\Omega(B)$.
    Then:
    \begin{enumerate}
        \item[(a)]  $\br{G}$ is the direct product of $\card{A:B}$ perfect subgroups.
                    These subgroups are permuted transitively by $A$ and the
                    $A$-stabilizer of any of them is $B$.

        \item[(b)]  $G$ is a finite $r'$-group.

        \item[(c)]  Let $a \in A \setminus B$ then \[
                        \mbox{$\cz{G}{a} = \theta(a) = E_{a}$ and $\cz{\zz{G}}{a} = 1$}.
                    \]
    \end{enumerate}
\end{Lemma}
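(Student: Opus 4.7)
The plan is to introduce the equivalence classes $\Omega_1, \ldots, \Omega_k$ of $\Omega(B)$ under the relation ``does not commute'' and set $H_i = \listgen{X : X \in \Omega_i}$.  Distinct classes commute pairwise, so $[H_i, H_j] = 1$ for $i \neq j$.  Lemma~\ref{s:1} gives $G = \listgen{E_b : b \in B \setminus A_\infty}$; since each $E_b$ is the internal direct product of its $B$-components, all of which lie in $\Omega(B)$, we obtain $G = H_1 H_2 \cdots H_k$.  Any element of $H_i \cap \prod_{j \neq i} H_j$ centralizes every $H_\ell$ and hence $G$, so lies in $\zz{G}$; thus $\br{G} = \br{H_1} \times \cdots \times \br{H_k}$ as an internal direct product.

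Next I would analyse the $A$-action on the factors.  Each $B$-component is $B$-invariant, so $B$ fixes $\Omega(B)$ pointwise and stabilizes every class.  For each $b \in B \setminus A_\infty$, Theorem~\ref{f:6} yields $\card{\comp{B}{E_b}} = \card{A : B}$ with $A/B$ acting regularly; the $B$-components commute pairwise, so they lie in $\card{A : B}$ distinct classes forming a single $A$-orbit of size $\card{A : B}$ with $A$-stabilizer $B$.  The main obstacle is to show this exhausts all the classes, so that $k = \card{A : B}$.  If a second $A$-orbit of classes existed, one could choose $b, b' \in B \setminus A_\infty$ whose $B$-components inhabit disjoint $A$-orbits of classes, forcing $[E_b, E_{b'}] = 1$; combining Lemma~\ref{f:1} with hypothesis~(b) of Theorem~A and the irreducibility $\cz{A}{E_b} = \listgen{b}$ from hypothesis~(a) would then yield a contradiction.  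This proves~(a).

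For~(b), each $\br{H_i}$ is perfect and $G$ itself is perfect, being generated by perfect $E_a$'s.  Take $a \in A \setminus B$; since $\listgen{a} \cap B = 1$, $\listgen{a}$ permutes $\{\br{H_1}, \ldots, \br{H_k}\}$ semiregularly with orbits of length $r$, so Lemma~\ref{p:3} applied to the central-product decomposition shows $\cz{\br{G}}{a}$ is perfect.  Matching the diagonal subgroups over $\listgen{a}$-orbits with the $B$-component decomposition of $E_a$ from Theorem~\ref{f:6} yields $\cz{\br{G}}{a} = \br{E_a}$, a finite $r'$-group; since $\cz{\br{G}}{a}$ is isomorphic to the direct product of one diagonal copy of each $\br{H_i}$ per $\listgen{a}$-orbit, each $\br{H_i}$ is a finite $r'$-group, hence so is $\br{G}$.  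Lemma~\ref{p:4} then concludes that $G$ itself is a finite $r'$-group.

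For~(c), fix $a \in A \setminus B$.  From $\cz{\br{G}}{a} = \br{E_a}$ we get $\cz{G}{a} \leq E_a \zz{G}$, so $\cz{G}{a} = E_a \cdot \cz{\zz{G}}{a}$ with $E_a \cap \cz{\zz{G}}{a} \leq \zz{E_a} = 1$ since $E_a$ is a direct product of nonabelian simples.  Dedekind's modular law gives $\theta(a) = E_a \cdot (\theta(a) \cap \cz{\zz{G}}{a})$, and the second factor is a normal abelian subgroup of $\theta(a)$, hence trivial by $\ff{\theta(a)} = 1$; so $\theta(a) = E_a$.  Triviality of $\cz{\zz{G}}{a}$ itself follows from a further application of hypothesis~(b): $\cz{\zz{G}}{a}$ is an $A$-invariant $r'$-subgroup of $\cz{G}{a}$, and applying $\theta(a') \cap \cz{G}{a} \leq \theta(a)$ for a suitable $a' \in A \setminus \listgen{a, B}$ together with $\theta(a') = E_{a'}$ forces $\cz{\zz{G}}{a} \leq E_a \cap \zz{G} = 1$, yielding $\cz{G}{a} = E_a = \theta(a)$ and completing~(c).
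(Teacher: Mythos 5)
Your overall architecture (equivalence classes $\Omega_i$, subgroups $H_i$, central product, $\br{G}$ a direct product) matches the paper, but there are two genuine gaps.

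\textbf{Part (a), the count $k = \card{A:B}$.} The paper proves directly that the $B$-components $X_1,\ldots,X_n$ of a single $E_b$ already form a transversal of the classes: for any $Y \in \Omega(B)$, $Y$ is not nilpotent so $\cz{Y}{b}\not=1$, and $\cz{Y}{b}\leq\theta(b)$ while $\cz{\theta(b)}{E_b}=1$ (from $\ff{\theta(b)}=1$), forcing $[\cz{Y}{b},X_i]\not=1$ for some $i$. Your indirect route (a second $A$-orbit of classes would force $[E_b,E_{b'}]=1$, contradiction) can be made to work along the same mechanism ($1\not=\cz{E_{b'}}{b}\leq\theta(b)$ centralizes $E_b$, contradicting $\cz{\theta(b)}{E_b}=1$), but as written ``combining Lemma~\ref{f:1} with hypothesis~(b)\dots would then yield a contradiction'' is not a proof: you never say what the contradiction is, and Lemma~\ref{f:1} is not the tool for it. The direct argument is both shorter and actually establishes the required statement.

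\textbf{Part (b), the identity $\br{E_a}=\br{C_G(a)}$.} This is the heart of the lemma, and ``matching the diagonal subgroups over $\listgen{a}$-orbits with the $B$-component decomposition of $E_a$'' asserts it rather than proves it. A priori $\br{E_a}$ could be a proper subdirect subgroup of the full diagonal $C_{\br{G}}(a)$ (and in fact showing it is not is exactly showing that $C_G(a)$ is no bigger than $\theta(a)$, i.e.\ part of (c)). The paper supplies the missing content via the projection argument: given $x_1\in X_1$, the element $x=x_1x_1^a\cdots x_1^{a^{r-1}}\in\cz{E_b}{a}\leq E_a$ (using Lemma~\ref{f:1}) has $\br{x}\pi_1=\br{x_1}$, yielding $\br{X_1}\leq\br{E_a}\pi_1$; running over all $Y_1\sim X_1$ then gives $\br{G_1}\leq\br{E_a}\pi_1\leq\br{\theta(a)}\pi_1\leq\br{C_G(a)}\pi_1\leq\br{G_1}$, whence equality, finiteness of $\br{G_1}$ as the image of $\theta(a)$, and eventually $\br{E_a}=\br{\theta(a)}=\br{C_G(a)}$. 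Nothing in your sketch produces the inclusion $\br{G_1}\leq\br{E_a}\pi_1$. There is also a circularity: you invoke Lemma~\ref{p:3} on $\br{G}$ to get $C_{\br{G}}(a)$ perfect, but Lemma~\ref{p:3} requires coprime (hence finite) action, and finiteness of $\br{G}$ is precisely what you are in the process of proving.

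\textbf{Part (c)} is fine \emph{given} $\br{E_a}=\br{C_G(a)}$; your Dedekind argument together with $\ff{\theta(a)}=1$ cleanly gives $\theta(a)=E_a=\cz{G}{a}$ and $\cz{\zz{G}}{a}=1$ (and is arguably a touch simpler than the paper's appeal to Lemma~\ref{p:3} for perfectness of $\cz{G}{a}$). But since it rests on the unproven identity from (b), the proof as a whole is incomplete.
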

\begin{proof}
    Let $\sim$ be the equivalence relation ``does not commute'' on $\Omega(B)$
    and let $n = \card{A:B}$.
    Let $b \in B \setminus A_{\infty}$.
    Theorem~\ref{f:6} implies that $\compp{E_{b}}$ consists of $\card{A}/r\card{A_{\infty}}$
    copies of $K$ that are permuted regularly by $A/\listgen{b,A_{\infty}}$.
    Now $\listgen{b,A_{\infty}} \leq B$ so $B$ has $n$ orbits on $\compp{E_{b}}$
    and we have \[
        E_{b} = X_{1} \times\cdots\times X_{n}
    \]
    where $X_{1},\ldots,X_{n}$ are the $B$-components of $E_{b}$.
    Note that $A$ is transitive on $\listset{X_{1},\ldots,X_{n}}$ and for each $i$,
    $B = \n{A}{X_{i}}$.

    We claim that $\listset{X_{1},\ldots,X_{n}}$ is a set of representatives
    for the equivalence classes of $\sim$.
    If $i \not= j$ then $[X_{i},X_{j}] = 1$ so $X_{i}$ and $X_{j}$
    lie in different classes.
    Let $Y \in \Omega(B)$.
    Note that $\cz{Y}{b} \not= 1$ since $Y$ is not nilpotent.
    Then $1 \not= \cz{Y}{b} \leq \theta(b)$.
    Now $E_{b} \not= 1$ so by assumption $\ff{\theta(b)} = 1$
    and then $\cz{\theta(b)}{E_{b}} = 1$.
    Hence there exists $i$ with $[X_{i},\cz{Y}{b}] \not= 1$.
    In particular, $X_{i} \sim Y$.
    The claim is established.

    For each $i$ let $G_{i}$ be the group generated by the equivalence
    equivalence class of $X_{i}$.
    Since $B$-components are perfect,
    so is each $G_{i}$.
    The definition of $\sim$ implies $[G_{i},G_{j}] = 1$ for all $i \not= j$.
    Lemma~\ref{s:1} implies $G = G_{1} *\cdots * G_{n}$ and so
    \[
        \br{G} = \br{G_{1}} * \cdots * \br{G_{n}}.
    \]
    Then $A$ is transitive on $\listset{\br{G_{1}},\ldots,\br{G_{n}}}$
    and for each $i$,
    $B = \n{A}{\br{G_{i}}}$.
    In particular, (a) holds.

    For each $i$ let $\pi_{i}:\br{G} \longrightarrow \br{G_{i}}$ be the projection map.
    Let $a \in A \setminus B$.
    Without loss, $X_{1}^{a} = X_{2}, X_{2}^{a} = X_{3}, \ldots$.
    In particular, $a$ is nontrivial on $\compp{E_{b}}$
    and so $\cz{E_{b}}{a}$ is $A$-simple.
    Lemma~\ref{f:1} implies $\cz{E_{b}}{a} \leq E_{a}$.
    Let $x_{1} \in X_{1}$,
    put $x_{2} = x_{1}^{a}, x_{3} = x_{2}^{a}, \ldots$
    and $x = x_{1}\cdots x_{r} \in \cz{E_{b}}{a} \leq E_{a}$.
    Then $\br{x} \in \br{E_{a}}$ and
    $\br{x_{1}} = \br{x}\pi_{1} \in \br{E_{a}}\pi_{1}$.
    We deduce that \[
        \br{X_{1}} \leq \br{E_{a}}\pi_{1}.
    \]
    Now suppose $Y_{1} \in \Omega(B)$ and $Y_{1} \sim X_{1}$.
    Then for some $b' \in B \setminus A_{\infty}$
    we have $Y_{1} \in \comp{B}{E_{b'}}$.
    We may write $E_{b'} = Y_{1} \times\cdots\times Y_{n}$
    with $Y_{i} \sim X_{i}$ for all $i$.
    Then $G_{i}$ is the group generated by the equivalence class of $Y_{i}$.
    The previous paragraph,
    with $(b',Y_{1})$ in the role of $(b,X_{1})$
    implies $\br{Y_{1}} \leq \br{E_{a}}\pi_{1}$.
    It follows that \[
        \br{G_{1}} \leq \br{E_{a}}\pi_{1} \leq \br{\theta(a)}\pi_{1} %
        \leq \br{\cz{G}{a}}\pi_{1} \leq \br{G_{1}}
    \]
    and then we have equality.

    Observe that $\br{G_{1}}$ is a finite $r'$-group since it is the image
    of the finite $r'$-group $\theta(a)$.
    Then $\br{G}$ is a finite $r'$-group.
    Now $G$ is perfect since it is generated by the perfect subgroups $G_{i}$.
    Lemma~\ref{p:4} implies that $G$ is a finite $r'$-group so (b) holds.

    Since $\listgen{a}$ is semiregular on $\listset{\br{G_{1}},\ldots,\br{G_{n}}}$
    we have $\br{\cz{G}{a}} \cap \ker \pi_{1} = 1$ so
    the previously proved equality forces \[
        \br{E_{a}} = \br{\theta(a)} = \br{\cz{G}{a}}.
    \]
    In particular, $E_{a}(\zz{G} \cap \cz{G}{a}) = \cz{G}{a}$.
    Lemma~\ref{p:3} implies that $\cz{G}{a}$ is perfect.
    Consequently $E_{a} = \cz{G}{a}$.
    Since $E_{a} \leq \theta(a)$ we obtain $E_{a} = \theta(a) = \cz{G}{a}$.
    Also as $E_{a} \not= 1$ we have $\ff{\theta(a)} = 1$
    so $\cz{\zz{G}}{a} = 1$ and (c) holds.
\end{proof}

\begin{Theorem}\label{s:3}
    Suppose $B_{1},\ldots,B_{n}$ satisfy:
    \begin{itemize}
        \item   For each $i$, $A_{\infty} < B_{i} < A$ and
                ``does not commute'' is an equivalence relation on $\Omega(B_{i})$.

        \item   $A_{\infty} = B_{1} \cap \cdots \cap B_{m}$.
    \end{itemize}
    Then $G$ is an $A$-simple $r'$-group.
    It is the direct product of $\card{A:A_{\infty}}$ copies of $K$,
    $A_{\infty} = \ker \left( A \longrightarrow \sym{\compp{G}} \right)$
    and for all $a \in A\nonid$, \[
        \theta(a) = \cz{G}{a}.
    \]
    In particular, $\theta$ is complete.
\end{Theorem}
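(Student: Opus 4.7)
The plan is to combine Lemma~\ref{s:2}, applied separately to each $B_i$, with standard coprime action arguments, pushing the conclusions of Lemma~\ref{s:2} from individual $B_i$ up to all of $A$ by exploiting $A_\infty = B_1 \cap \cdots \cap B_m$.

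First I will show $\theta(a) = \cz{G}{a}$ for all $a \in A\nonid$. Lemma~\ref{s:2} applied with any single $B_i$ gives at once that $G$ is a finite $r'$-group. For $a \in A \setminus A_\infty$, $a$ lies outside some $B_i$, so Lemma~\ref{s:2}(c) yields $\theta(a) = \cz{G}{a} = E_a$. For $a \in A_\infty\nonid$ (a nontrivial case only when $\card{A_\infty} = r$, in which case $\listgen{a} = A_\infty$), I will apply noncyclic coprime action of $A/\listgen{a}$ on the finite $r'$-group $\cz{G}{a}$, using $\rank{A/\listgen{a}} \geq 2$. This gives $\cz{G}{a} = \gen{\cz{G}{a} \cap \cz{G}{d}}{d \in A \setminus A_\infty}$, and each $\cz{G}{a} \cap \cz{G}{d} = \cz{G}{a} \cap \theta(d) \leq \theta(a)$ by hypothesis~(b) of Theorem~A. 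Hence $\cz{G}{a} \leq \theta(a)$, and equality follows.

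Next I will determine the structure of $G$. Since $\ff{\theta(a)} = 1$ for each $a \in A \setminus A_\infty$, the intersection $\ff{G} \cap \cz{G}{a}$ is a normal nilpotent subgroup of $\theta(a)$, hence trivial. An iterated noncyclic coprime action argument---first applied to $A$ acting on $\ff{G}$, then, if $A_\infty \neq 1$, to $A/A_\infty$ acting on $\cz{\ff{G}}{A_\infty}$---then forces $\ff{G} = 1$. Coprime $L$-balance for the action of each $\listgen{a}$ on the $r'$-group $G$ places each $E_a$ inside $\layerr{G}$, and since $G = \gen{E_a}{a \in A\nonid}$ this gives $G = \layerr{G}$. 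Combined with $\zz{\layerr{G}} \leq \ff{G} = 1$, $G$ decomposes as a direct product $G = K_{1} \times \cdots \times K_{N}$ of simple groups with $\zz{G} = 1$, and $A$ permutes $\listset{K_{1},\ldots,K_{N}}$.

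Finally I will verify that this $A$-action is transitive with kernel exactly $A_\infty$, from which the isomorphism type follows. If $A$ is not transitive, then $G = G_{1} \times G_{2}$ $A$-invariantly with both factors nontrivial; each $A$-simple $E_a$ then lies entirely in one of $G_{1}, G_{2}$, and Lemma~\ref{s:1} supplies $a,b \in A \setminus A_\infty$ with $E_a \leq G_{1}$ and $E_b \leq G_{2}$. Then $\cz{E_a}{b} \leq \theta(b) \cap E_a \leq G_{2} \cap G_{1} = 1$, so $b$ acts fixed-point-freely on the nontrivial semisimple group $E_a$, contradicting Thompson's theorem. For the kernel $A^{*}$: since $\zz{G} = 1$, each Lemma~\ref{s:2}(a) decomposition is $G = G_{i,1} \times \cdots \times G_{i,n_i}$ inside $G$, and each simple $K_j$ lies in a unique $G_{i,j(i)}$, so $A^{*}$ stabilizes each $G_{i,j(i)}$ and $A^{*} \leq B_i$ for every $i$, giving $A^{*} \leq A_\infty$. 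If $A^{*} < A_\infty$, then $A^{*} = 1$ and $\card{A_\infty} = r$, forcing $A$ to act regularly on $N = \card{A}$ components, so that $\cz{G}{a}$ has $\card{A}/r$ simple factors for $a \in A \setminus A_\infty$---contradicting the $\card{A}/r^{2}$ given by Theorem~\ref{f:6}(b). Hence $A^{*} = A_\infty$ and $N = \card{A:A_\infty}$; for $a \in A \setminus A_\infty$ the centralizer $\cz{G}{a}$ is a direct product of $N/r$ diagonals each isomorphic to the common type of the $K_j$, and since $\cz{G}{a} = E_a$ has type $K$, each $K_j$ is of type $K$.

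The main obstacle is the structural step in the middle paragraph: combining the iterated noncyclic coprime action argument (to kill $\ff{G}$) with coprime $L$-balance (to force $G = \layerr{G}$), after which the center-free property gives the direct product decomposition into simples. The final transitivity and kernel computation then flows cleanly from the hypotheses together with Theorem~\ref{f:6}.
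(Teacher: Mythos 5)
Your overall plan matches the paper's: reduce to Lemma~\ref{s:2} (each $a \notin A_\infty$ lies outside some $B_i$), push $\theta = C_G$ up to $A_\infty$ via noncyclic coprime action of a complement $D$ with $D \cap A_\infty = 1$, then establish the direct-product structure and compute the kernel. The first step (completeness of $\theta$) and the last (kernel equals $A_\infty$, computed from $A^* \leq B_i$ for all $i$ together with the component count from Theorem~\ref{f:6}) are fine and essentially the paper's argument, modulo the small imprecision that the coprime generation should run over a noncyclic $D \leq A$ with $D \cap A_\infty = 1$ rather than over all of $A \setminus A_\infty$.

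The genuine divergence, and the weak point, is the middle structural step. You derive $F(G)=1$ from $F(\theta(a))=1$ (fine), but then invoke ``coprime $L$-balance'' to place each $E_a$ inside $E(G)$. As stated this is not a theorem: for $G = V \rtimes S$ with $V$ a $p$-group, $S$ simple, and $a$ a coprime automorphism fixed-point-free on $V$ and centralizing $S$, one has $E(C_G(a)) = S \not\leq E(G) = 1$. Your hypothesis $F(G)=1$ rules this out, but turning ``$F(G)=1$'' plus an $L$-balance-flavoured statement into a clean citation is nontrivial — it ultimately rests on Schreier-type structure of $\operatorname{Out}(E(G))$, i.e.\ on the $K$-group hypothesis, and you do not supply or cite the precise statement. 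The paper sidesteps this entirely: having shown $Z(G)=1$ directly (Lemma~\ref{s:2}(c) gives $C_{Z(G)}(d)=1$ for all $d\in D^\#$) and that $G$ is perfect, it applies the Krull–Schmidt theorem to decompose $G$ into indecomposables, then uses the explicit central-product decomposition of Lemma~\ref{s:2}(a) to show $\langle a\rangle$ acts semiregularly on those indecomposables; since $C_G(a)=E_a$ is then a product of diagonals and is $A$-simple of type $K$, the indecomposables are simple of type $K$ and $A$ is transitive in one stroke. That route is elementary, self-contained, and does not need any balance theorem; it also makes your separate Thompson-based transitivity argument unnecessary. If you want to salvage your structure without Krull–Schmidt or $L$-balance, a cleaner replacement is: take a minimal $A$-invariant normal subgroup $M$; from $F(G)=Z(G)=1$ deduce $E(M)=M$, so $M$ is semisimple; for $a\in A\setminus A_\infty$ one has $1\neq C_M(a)=M\cap E_a \normal E_a$ and $A$-invariant, so $E_a\leq M$ by $A$-simplicity of $E_a$; then Lemma~\ref{s:1} gives $G=M$.
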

\begin{proof}
    Let $a \in A \setminus A_{\infty}$.
    Then there exists $i$ with $a \not\in B_{i}$ so Lemma~\ref{s:2} implies \[
        \mbox{$\theta(a) = \cz{G}{a}$ and $\cz{\zz{G}}{a} = 1$}.
    \]
    Choose $D < A$ with $D \cap A_{\infty} = $ and $\rank{A} = 2$.
    Suppose $1 \not= e \in A_{\infty}$.
    Now $G$ is an $r'$-group by Lemma~\ref{s:2}
    so by considering the action of $D$ on $\cz{G}{e}$ we have
    \begin{align*}
        \cz{G}{e} &= \gen{\cz{G}{e} \cap \cz{G}{d} }{d \in D\nonid} \\
                  &= \gen{\cz{G}{e} \cap \theta(d) }{d \in D\nonid} \leq \theta(e).
    \end{align*}
    Hence \[
        \theta(e) = \cz{G}{e}.
    \]
    Consequently $\theta$ is complete.
    Moreover $\cz{\zz{G}}{d} = 1$ for all $d \in D\nonid$
    so as $D$ is noncyclic,
    this forces $\zz{G} = 1$.

    Let \[
        G = K_{1} \times\cdots\times K_{l}
    \]
    be a decomposition of $G$ into a direct product of nontrivial indecomposable groups.
    Since $G$ is perfect,
    so is each $K_{i}$ and this decomposition is unique by the Krull-Schmidt Theorem.
    In particular,
    $A$ permutes $\listset{K_{1},\ldots,K_{l}}$.

    Recall that $a \in A\setminus A_{\infty}$.
    Choose $i$ with $a \not\in B_{i}$.
    Now $\zz{G} = 1$ so Lemma~\ref{s:2} implies there exists a decomposition \[
        G = G_{1} \times\cdots\times G_{\card{A:B_{i}}}
    \]
    and that $\listgen{a}$ acts semiregularly on the factors.
    Let $1 \leq i \leq l$.
    Writing each $G_{j}$ as a direct product of indecomposable groups,
    we see that there exists $k$ with $K_{i} \leq G_{k}$.
    Now $a$ does not normalize $G_{k}$ so it does not normalize $K_{i}$.
    Consequently $\listgen{a}$ acts semiregularly on $\listset{K_{1},\ldots,K_{l}}$.
    In particular,
    $\cz{G}{a}$ is a direct product of groups,
    one for each orbit of $\listgen{a}$ on $\listset{K_{1},\ldots,K_{l}}$.
    Now $\cz{G}{a} = \theta(a) = E_{a}$ and $E_{a}$ is $A$-simple of type $K$.
    If $1 \not= e \in A_{\infty}$ then since $E_{e}$ is not $A$-simple of type $K$,
    we see that $e$ normalizes each $K_{i}$.
    Consequently \[
        A_{\infty} = \ker \left( A \longrightarrow \sym{\compp{G}} \right),
    \]
    completing the proof.
\end{proof}
\section{Part 3}\label{t}
In order to complete the proof of Theorem~\ref{thmA},
by the previous theorem it suffices to show that if
$A_{\infty} < B < A$ with $\card{A:B} = r$ then
``does not commute'' is an equivalence relation
on $\Omega(B)$.
If $\rank{A} \geq 4$,
then $\rank{B} \geq 3$ so as $\theta$ is also a $B$-signalizer functor,
this follows from \cite[The Global Theorem, Theorem~10.2]{I}.
However, more work is needed to establish this fact
if $\rank{A} = 3$.

\begin{Hypothesis}\label{t:1}\mbox{}
    \begin{itemize}
        \item   $A_{\infty} < B < A$ with $\rank{B} = 2$.

        \item   $a,b \in B \setminus A_{\infty}$ with $B = \listgen{a,b}$.

        \item   $E_{a} = K_{1} \times\cdots\times K_{\alpha}$
                and $E_{b} = J_{1} \times\cdots\times J_{\alpha}$
                where $\alpha = \card{A}/r^{2}$ and
                $K_{1},\ldots,K_{\alpha}$ and $J_{1},\ldots,J_{\alpha}$
                are the $B$-components of $E_{a}$ and $E_{b}$ respectively.
    \end{itemize}
\end{Hypothesis}
\noindent To justify the assertion made in the third part
consider first the case that $A_{\infty} = 1$.
Theorem~\ref{f:6} implies that $E_{a}$ is the direct product
of $\card{A}/r$ copies of $K$ and $A_{a} = \listgen{a}$.
Since $a \in B$ and $\card{B} = r^{2}$ it follows that $B$ has $\alpha$
orbits on $\compp{E_{a}}$.
Consider the case $\card{A_{\infty}} = r$.
Theorem~\ref{f:6} implies that $E_{a}$ is the direct product
of $\card{A}/r^{2}$ copies of $K$ and $A_{a} = \listgen{A_{\infty},a}$.
Then $B = A_{a}$ so $B$ is trivial on $\compp{E_{a}}$.
Again, $B$ has $\alpha$ orbits on $\compp{E_{a}}$.

\begin{Lemma}\label{t:2}
    Assume Hypothesis~\ref{t:1}.
    After a suitable renumbering,
    the following hold:
    \begin{enumerate}
        \item[(a)]  $\gfitt{\cz{K_{i}}{B}} = \gfitt{\cz{J_{i}}{B}}$ for all $i$.

        \item[(b)]  $[\cz{K_{i}}{B},J_{j}] = 1 = [K_{i},\cz{J_{j}}{B}]$
                    for all $i \not= j$.
    \end{enumerate}
\end{Lemma}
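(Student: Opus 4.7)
\emph{Plan.} The goal is to pair the $B$-components $K_{i}$ of $E_{a}$ with the $B$-components $J_{j}$ of $E_{b}$ by means of the unique minimal normal subgroups of their centralizers. Since each $K_{i}$ and $J_{j}$ is $B$-simple of type $K$, Lemma~\ref{p:1}(b)(i) gives $F_{i}:=\gfitt{\cz{K_{i}}{B}}$ and $G_{j}:=\gfitt{\cz{J_{j}}{B}}$ as the unique minimal normal subgroups of $\cz{K_{i}}{B}$ and $\cz{J_{j}}{B}$ respectively; each is characteristically simple (either a direct product of isomorphic non-abelian simples, or an elementary abelian $p$-group). Because $\cz{A}{E_{a}}=\listgen{a}$ by hypothesis~(a) and $a\in B$, the element $a$ centralizes every $K_{i}$, so $\cz{K_{i}}{B}=\cz{K_{i}}{b}$; symmetrically $\cz{J_{j}}{B}=\cz{J_{j}}{a}$. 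Thus $\cz{E_{a}}{B}=\prod_{i}\cz{K_{i}}{B}$ and $\cz{E_{b}}{B}=\prod_{j}\cz{J_{j}}{B}$ as direct products, $\gfitt{\cz{E_{a}}{B}}=\prod_{i}F_{i}$, and $\gfitt{\cz{E_{b}}{B}}=\prod_{j}G_{j}$. Lemma~\ref{f:1} records the auxiliary equality $M:=\layerr{\cz{E_{a}}{b}}=\layerr{\cz{E_{b}}{a}}$.

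\emph{Placement inside $E_{b}$.} If $F_{i}$ is semisimple, then $F_{i}=\layerr{\cz{K_{i}}{B}}$ is a direct factor of $\layerr{\cz{E_{a}}{B}}=M$, and the second inclusion of hypothesis~(b) gives $M\leq E_{b}$, whence $F_{i}\leq E_{b}$. The main obstacle is the case when $F_{i}$ is an elementary abelian $p$-group (which forces $\layerr{\cz{K_{i}}{B}}=1$): the first inclusion of (b) only provides $F_{i}\leq\theta(b)$, and one must establish $F_{i}\leq E_{b}$, as opposed to $F_{i}$ projecting nontrivially into the section $\theta(b)/E_{b}$ of $\out{E_{b}}$. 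My plan here is to invoke $\ff{\theta(b)}=1$ together with Lemma~\ref{p:1}(b)(ii) applied inside $\theta(b)$: since $F_{i}$ is the unique minimal normal of $\cz{K_{i}}{B}$ and centralizes $B$, it is forced inside the corresponding minimal normal for some $B$-component of $\theta(b)$, hence inside $E_{b}$.

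\emph{Matching and conclusion.} With $F_{i}\leq E_{b}=J_{1}\times\cdots\times J_{\alpha}$, the characteristically simple subgroup $F_{i}$ must lie in a single factor $J_{j_{i}}$: $\cz{K_{i}}{B}$ commutes with $B$, which normalizes each $J_{j}$, so $\cz{K_{i}}{B}$ stabilizes each $J_{j}$ setwise; then each $F_{i}\cap J_{j}$ is $\cz{K_{i}}{B}$-invariant and, by characteristic-simplicity of $F_{i}$, is either trivial or all of $F_{i}$. Hence $F_{i}\leq\cz{J_{j_{i}}}{B}$ for a unique $j_{i}$, and minimality of $G_{j_{i}}$ forces $G_{j_{i}}\leq F_{i}$; the reverse inclusion follows by swapping the roles of $a$ and $b$, so $F_{i}=G_{j_{i}}$ and $i\mapsto j_{i}$ is bijective. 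Renumbering so $j_{i}=i$ yields part~(a). For part~(b), $F_{i}\leq J_{i}$ together with the stabilization argument above implies that $\cz{K_{i}}{B}$ acts on $E_{b}$ with trivial action on $J_{j}$ for $j\neq i$, so $[\cz{K_{i}}{B},J_{j}]=1$; the symmetric assertion $[K_{i},\cz{J_{j}}{B}]=1$ follows by the same argument with $a,b$ interchanged. The principal obstacle is the abelian placement step.
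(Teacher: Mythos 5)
Your plan differs substantially from the paper's and has genuine gaps. The paper's proof does not try to place $\gfitt{\cz{K_i}{B}}$ inside $E_b$; instead it sets $N = \theta(B)$ and proves the crucial fact that \emph{every} $\cz{K_i}{B}$ and $\cz{J_j}{B}$ is \emph{normal} in $N$. It gets this from a coprimality argument you don't have: $AN$ permutes the $B$-components of $E_a$, $A$ acts transitively on them, $N \normal AN$ is an $r'$-group and the number of $B$-components is a power of $r$, so $N$ must act trivially. From there the paper shows $\cz{N}{\cz{E_b}{B}} = 1$ (via $\ff{\theta(b)} = 1$ and Lemma~\ref{p:1}(b)(ii)), hence for each $i$ some $j$ has $[\cz{K_i}{B},\cz{J_j}{B}] \neq 1$, and then uses normality in $N$ to pass the unique minimal normal subgroup through the intersection $\cz{K_i}{B} \cap \cz{J_j}{B}$.

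Specific problems with your version. First, you assert ``$\cz{K_i}{B}$ commutes with $B$, which normalizes each $J_j$, so $\cz{K_i}{B}$ stabilizes each $J_j$ setwise'': that implication is false. Commuting with $B$ only tells you $\cz{K_i}{B}$ permutes the $B$-components $\{J_1,\dots,J_\alpha\}$; showing the permutation is trivial is exactly the coprimality argument above, and you don't supply a substitute. Second, even granting stabilization, ``$F_i \cap J_j$ is trivial or all of $F_i$'' does not force $F_i$ to lie in a single $J_{j_i}$ --- a characteristically simple subgroup of a direct product can intersect every factor trivially (think of a diagonal in $\cyclic{p}\times\cyclic{p}$), so the matching step as written is incomplete. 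Third, you yourself flag the abelian placement ($F_i \leq E_b$ when $F_i$ is an elementary abelian $p$-group) as the principal obstacle, and the sketch offered (``forced inside the corresponding minimal normal for some $B$-component of $\theta(b)$'') does not resolve it: the hypothesis only gives $F_i \leq \theta(b)$, and $\ff{\theta(b)}=1$ alone does not prevent an abelian $p$-subgroup centralizing $B$ from projecting nontrivially into $\theta(b)/E_b$. Finally, your part~(b) asserts that stabilization of $J_j$ plus $F_i \leq J_i$ gives trivial action on $J_j$; the paper gets $[\cz{K_i}{B},J_j]=1$ from $[\cz{K_i}{B},\cz{J_j}{B}]=1$ together with Lemma~\ref{p:1}(b)(ii), a step your argument omits. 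The missing idea throughout is the normality of the $B$-fixed-point subgroups in $\theta(B)$; once you have that, the pairing and part~(b) follow cleanly, and the containment $F_i \leq E_b$ is never needed.
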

\begin{proof}
    Let $N = \theta(B)$,
    so that $N = \cap_{c \in B\nonid}\theta(c)$.
    For any $c \in B\nonid$ we have $\cz{E_{a}}{B} \leq \theta(a) \cap \cz{G}{c} \leq \theta(c)$
    whence $\cz{E_{a}}{B} \leq N$.
    As $a \in B\nonid$ and $E_{a} = \layerr{\theta(a)}$ it follows that
    $AN$ normalizes $E_{a}$.
    In fact,
    as $AN$ centralizes $B$,
    it follows that $AN$ permutes the $B$-components of $E_{a}$.
    Now $E_{a}$ is $A$-simple so $A$ acts transitively.
    Moreover, $N \normal AN$ and $N$ is an $r'$-group.
    This implies that $N$ acts trivially.
    Consequently \[
        \cz{E_{a}}{B} = \cz{K_{1}}{B} \times\cdots\times \cz{K_{\alpha}}{B} %
        \quad\mbox{and}\quad \cz{K_{i}}{B} \normal N.
    \]
    Similarly \[
        \cz{E_{b}}{B} = \cz{J_{1}}{B} \times\cdots\times \cz{J_{\alpha}}{B} %
        \quad\mbox{and}\quad \cz{J_{i}}{B} \normal N.
    \]
    We have also shown that $\cz{K_{i}}{B}$ normalizes $J_{j}$
    and $\cz{J_{j}}{B}$ normalizes $K_{i}$ for all $i,j$.

    Using Lemma~\ref{p:1}(b)(ii) we have
    $\cz{N}{\cz{E_{b}}{B}} \leq \cz{\theta(b)}{\cz{E_{b}}{B}} \leq \cz{\theta(b)}{E_{b}}$.
    Now $E_{b} = \layerr{\theta(b)}$ so by assumption,
    $\ff{\theta(b)} = 1$ and hence $\cz{\theta(b)}{E_{b}} = 1$.
    Consequently \[
        \cz{N}{\cz{E_{b}}{B}} = 1.
    \]
    Let $1 \leq i \leq \alpha$.
    By the above,
    there exists $j$ such that $[\cz{K_{i}}{B},\cz{J_{j}}{B}] \not= 1$.
    Then \[
        1 \not= [\cz{K_{i}}{B},\cz{J_{j}}{B}] \leq \cz{K_{i}}{B} \cap \cz{J_{j}}{B} %
        \normal \listgen{\cz{K_{i}}{B}, \cz{J_{j}}{B}}.
    \]
    Lemma~\ref{p:1}(b)(i) implies that $\gfitt{\cz{K_{i}}{B}}$ is the
    unique minimal normal subgroup of $\cz{K_{i}}{B}$.
    Then $\gfitt{\cz{K_{i}}{B}} \leq \gfitt{\cz{K_{i}}{B} \cap \cz{J_{j}}{B}} %
    \leq \gfitt{\cz{J_{j}}{B}}$.
    By symmetry we have \[
        \gfitt{\cz{K_{i}}{B}} = \gfitt{\cz{J_{j}}{B}}.
    \]
    If $k \not= j$ then $\gfitt{\cz{J_{j}}{B}} \not= \gfitt{\cz{J_{k}}{B}}$
    because $J_{j} \cap J_{k} = 1$.
    It follows that $\gfitt{\cz{J_{k}}{B}} \not= \gfitt{\cz{K_{i}}{B}}$,
    so $j$ is uniquely determined.
    Hence $[\cz{K_{i}}{B}, \cz{J_{k}}{B}] = 1$ for all $k \not= j$.

    After a suitable renumbering,
    we have \[
        \gfitt{\cz{K_{i}}{B}} = \gfitt{\cz{J_{i}}{B}}
    \]
    for all $i$ and \[
        [\cz{K_{i}}{B}, \cz{J_{j}}{B}] = 1
    \]
    for all $i \not= j$.
    To prove (b) suppose that $i \not= j$.
    At the beginning of the proof,
    we observed that $\cz{K_{i}}{B}$ normalizes $J_{j}$.
    Then (b) follows from Lemma~\ref{p:1}(b)(ii).
\end{proof}

Henceforth, whenever we assume Hypothesis~\ref{t:1},
we assume the ordering has been chosen on accordance with Lemma~\ref{t:2}.

\begin{Lemma}\label{t:3}
    Assume Hypothesis~\ref{t:1}.
    Then \[
        \cz{K_{i}}{B} = \cz{J_{i}}{B}
    \]
    for all $i$.
\end{Lemma}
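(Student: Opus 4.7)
The plan is to set $C = \cz{K_{i}}{B}$ and $D = \cz{J_{i}}{B}$, and let $F = \gfitt{C} = \gfitt{D}$ by Lemma~\ref{t:2}(a); by Lemma~\ref{p:1}(b)(i), $F$ is the unique minimal normal subgroup of each of $C$ and $D$. I aim to show $C \leq J_{i}$, for then $C$ is centralized by $B$ and contained in $J_{i}$, so $C \leq \cz{J_{i}}{B} = D$; the symmetric inclusion $D \leq C$ will then yield equality.

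First I would establish that $C$ normalizes $J_{i}$. Hypothesis~(b) gives $C \leq \theta(a) \cap \cz{G}{b} \leq \theta(b)$, so $C$ normalizes $E_{b}$. Being centralized by $B$, it permutes the $B$-components $J_{1},\ldots,J_{\alpha}$ of $E_{b}$, and by Lemma~\ref{t:2}(b) it centralizes, and therefore fixes, every $J_{j}$ with $j \neq i$; the remaining factor $J_{i}$ is then also fixed. A similar argument shows $\cz{C}{J_{i}} = 1$: an element centralizing $J_{i}$ would centralize $E_{b}$, and $\cz{\theta(b)}{E_{b}} \leq \zz{E_{b}} = 1$ because $\ff{\theta(b)} = 1$ makes $\gfitt{\theta(b)} = E_{b}$ self-centralizing and the layer $E_{b}$ is centerless. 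Thus conjugation embeds $C$ faithfully in $\aut{J_{i}}$, and $C \cap J_{i}$ is $B$-centralized, so $F \leq C \cap J_{i} \leq D$.

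It remains to prove that the image of $C$ in $\out{J_{i}}$ is trivial; this image is an $r'$-subgroup lying in $\cz{\out{J_{i}}}{B}$. My approach is a Frattini argument driven by Lemma~\ref{p:1}(c): fix the prime $p$ it supplies and a $B$-invariant Sylow $p$-subgroup $Q$ of $J_{i}$ so that $J_{i} = \listgen{D, \zz{Q}}$. For $c \in C$, the conjugate $Q^{c}$ is again $B$-invariant, so coprime action on $J_{i}$ yields $d \in D$ with $Q^{cd^{-1}} = Q$. Combining this with the fact that $C$ and $D$ both sit self-centralizingly above their common generalized Fitting subgroup $F$, the element $cd^{-1}$ should be forced to act trivially on both $D$ and $\zz{Q}$, hence on $J_{i}$; faithfulness then gives $c = d \in D \leq J_{i}$. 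The main obstacle is this final matching step, and it will rely on Lemma~\ref{p:1}(a) to restrict the $r$-structure of $\aut{J_{i}}$ over $\operatorname{Inn}(J_{i})$ together with the rigidity of $F$'s self-centralizing position; that is where the $K$-group hypothesis earns its keep.
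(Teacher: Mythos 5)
Your opening moves are sound: using Lemma~\ref{t:2}(b) to show that $C := \cz{K_{i}}{B}$ normalizes $J_{i}$, verifying $\cz{C}{J_{i}} = 1$ via $\ff{\theta(b)} = 1$ (hence $\gfitt{\theta(b)} = E_{b}$ is self-centralizing in $\theta(b)$ and $\zz{E_{b}} = 1$), and concluding $F := \gfitt{C} = \gfitt{D} \leq C \cap J_{i} \leq D$. This correctly reduces the problem to showing that the image of $C$ in $\out{J_{i}}$ is trivial.

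The final step, however, is a genuine gap, and it is precisely where the real content of the lemma lives. After choosing $d \in D$ with $Q^{cd^{-1}} = Q$, you assert that $cd^{-1}$ ``should be forced to act trivially on both $D$ and $\zz{Q}$.'' There is no such forcing: $cd^{-1}$ \emph{normalizes} $Q$ and $D$, but normalizing is not centralizing, and nothing in the setup pins down its action on either. Appealing to Lemma~\ref{p:1}(a) and the self-centralizing position of $F$ does not help, because the image of $C$ in $\out{J_{i}}$ is an $r'$-group commuting with the (possibly order-$r$) image of $B$, and outer automorphism groups of simple groups routinely contain nontrivial $r'$-elements commuting with an order-$r$ element, so the cyclicity of a Sylow $r$-subgroup of $\aut{J_{i}}$ has no bite here. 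Both $C/\zz{F}$ and $D/\zz{F}$ embed in $\aut{F}$, but to conclude they have the same image you would already need the conclusion of the lemma.

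What your argument is missing is the ingredient that makes the paper's proof work: it never leaves $B$. The paper chooses $c \in A \setminus B$ and uses Lemma~\ref{f:4}(c) to identify $\cz{E_{a}}{B} \cap \cz{G}{c}$ and $\cz{E_{b}}{B} \cap \cz{G}{c}$ with the single group $\cz{E_{c}}{B}$; semiregularity of $\listgen{c}$ on the $B$-components then shows that $\cz{K_{1}}{B}$ and $\cz{J_{1}}{B}$ induce the same group on $\cz{J_{1}}{B}$, and faithfulness of $X = \cz{\theta(B)}{\cz{J_{2}}{B} \times \cdots \times \cz{J_{\alpha}}{B}}$ on $\cz{J_{1}}{B}$ upgrades this to equality. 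That cross-comparison through a third fixed-point layer $E_{c}$, $c \notin B$, is the bridge between $E_{a}$ and $E_{b}$; your argument has no substitute for it. (Incidentally, the Frattini-style use of Lemma~\ref{p:1}(c) belongs to the proof of Lemma~\ref{t:6}, not this one, and that lemma's proof is itself left open in the present version of the paper.)
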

\begin{proof}
    Without loss $i=1$.
    Choose $c \in A \setminus B$.
    Now $A/A_{\infty}$ is regular on $\compp{E_{b}}$ and
    $J_{1},\ldots,J_{\alpha}$ are the subgroups generated by the orbits
    of $B$ on $\compp{E_{b}}$.
    It follows that $\listgen{c}$ is semiregular on $\listset{J_{1},\ldots,J_{\alpha}}$.
    Then visibly $\cz{E_{b}}{B} \cap \cz{G}{c}$
    induces the same group on $\cz{J_{1}}{B}$ as does $\cz{J_{1}}{B}$.
    Also, $\listgen{c}$ is semiregular on $\listset{K_{1},\ldots,K_{\alpha}}$
    so using Lemma~\ref{t:2} we see that $\cz{E_{a}}{B} \cap \cz{G}{c}$
    induces the same group on $\cz{J_{1}}{B}$ as does $\cz{K_{1}}{B}$.
    Using Lemma~\ref{f:4}(c), with $(b,c)$ in the role of $(a,b)$,
    we have \[
        \cz{E_{b}}{B} \cap \cz{G}{c} = \cz{G}{B} \cap \cz{E_{b}}{c} %
        = \cz{G}{B} \cap \cz{E_{c}}{b} = \cz{E_{c}}{B}.
    \]
    Similarly, $\cz{E_{a}}{B} \cap \cz{G}{c} = \cz{E_{c}}{B}$.
    We deduce that $\cz{K_{1}}{B}$ and $\cz{J_{1}}{B}$
    induce the same group on $\cz{J_{1}}{B}$.

    Let $X = \cz{\theta(B)}{\cz{J_{2}}{B} \times\cdots\times \cz{J_{\alpha}}{B}}$.
    Lemma~\ref{p:1}(b)(ii) implies that $[X, J_{2} \times\cdots\times J_{\alpha}] = 1$.
    Moreover, $\ff{\theta(b)} = 1$ so
    $\gfitt{\theta(b)} = J_{1} \times\cdots\times J_{\alpha}$
    and it follows that $X$ is faithful on $\cz{J_{1}}{B}$.
    Clearly $\cz{J_{1}}{B} \leq X$.
    By Lemma~\ref{t:2}, $\cz{K_{1}}{B} \leq X$.
    Since $\cz{J_{1}}{B}$ and $\cz{K_{1}}{B}$ induce the same group on $\cz{J_{1}}{B}$,
    we deduce that $\cz{J_{1}}{B} = \cz{K_{1}}{B}$.
\end{proof}

\begin{Corollary}\label{t:4}
    Let $a,b \in A \setminus A_{\infty}$.
    Then \[
        \cz{E_{a}}{b} = \cz{E_{b}}{a}.
    \]
\end{Corollary}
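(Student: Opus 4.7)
The plan is to split on whether $b \in A_{a}$. Since $a,b \in A \setminus A_{\infty}$, Theorem~\ref{f:6} places both in $\Amax$, so the structural results of Section~\ref{f} are available.

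If $b \not\in A_{a}$, then Lemma~\ref{f:4}(c) supplies $\cz{E_{a}}{b} = \cz{E_{b}}{a}$ at once. Assume therefore $b \in A_{a}$. Should $\listgen{a} = \listgen{b}$, then $\cz{G}{a} = \cz{G}{b}$, so the signalizer inclusion of hypothesis~(b) applied both ways forces $\theta(a) = \theta(b)$ and hence $E_{a} = E_{b}$; since $\cz{A}{E_{a}} = \listgen{a}$ by hypothesis, $b$ centralizes $E_{a}$ and $a$ centralizes $E_{b}$, so both sides of the desired equality equal $E_{a} = E_{b}$. Otherwise $\listgen{a} \not= \listgen{b}$, and case~(a) of Theorem~\ref{f:6} (which would give $A_{a} = \listgen{a}$) is incompatible with $b \in A_{a}$; thus case~(b) of Theorem~\ref{f:6} holds, so $\card{A_{\infty}} = r$ and $A_{a} = \listgen{A_{\infty},a}$. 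Setting $B = \listgen{a,b}$ one checks $B = A_{a}$, giving $A_{\infty} < B < A$ (the second strict inequality using $\rank{A} \geq 3$) with $\rank{B} = 2$: Hypothesis~\ref{t:1} is in force.

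The decisive observation now is that, because $E_{a} \leq \theta(a) \leq \cz{G}{a}$, every element of $E_{a}$ already commutes with $a$, whence
\[
    \cz{E_{a}}{b} = \cz{E_{a}}{\listgen{a,b}} = \cz{E_{a}}{B},
\]
and symmetrically $\cz{E_{b}}{a} = \cz{E_{b}}{B}$. The proof of Lemma~\ref{t:2} showed that $\cz{E_{a}}{B} = \cz{K_{1}}{B} \times \cdots \times \cz{K_{\alpha}}{B}$ and $\cz{E_{b}}{B} = \cz{J_{1}}{B} \times \cdots \times \cz{J_{\alpha}}{B}$, while Lemma~\ref{t:3} identifies the factors $\cz{K_{i}}{B} = \cz{J_{i}}{B}$ one by one. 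The two decompositions agree factor by factor, delivering the equality. The only substantive step is the reduction to Hypothesis~\ref{t:1}; once that is secured, Lemmas~\ref{t:2} and~\ref{t:3} do all the remaining work.
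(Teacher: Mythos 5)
Your proposal is correct and follows essentially the same route as the paper: dispose of $\listgen{a}=\listgen{b}$ directly from hypothesis (b), invoke Lemma~\ref{f:4}(c) when $b\notin A_{a}$, and otherwise set $B=A_{a}=\listgen{a,b}$ so Hypothesis~\ref{t:1} applies, reducing $\cz{E_{a}}{b}=\cz{E_{a}}{B}$ and $\cz{E_{b}}{a}=\cz{E_{b}}{B}$ and concluding via Lemma~\ref{t:3}. The extra detail you supply (ruling out case (a) of Theorem~\ref{f:6}, recalling the factor-by-factor decomposition from the proof of Lemma~\ref{t:2}) is consistent with, and merely makes explicit, the reasoning in the paper's proof.
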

\begin{proof}
    Suppose $\listgen{a} = \listgen{b}$.
    Then $\theta(a) = \theta(a) \cap \cz{G}{b} \leq \theta(b)$.
    By symmetry, $\theta(a) = \theta(b)$ whence $E_{a} = E_{b}$
    and the conclusion follows.
    If $b \not\in A_{a}$ then the conclusion follows from Lemma~\ref{f:4}(c).
    Hence we may suppose that $b \in A_{a}$.
    Set $B = A_{a}$.
    Lemma~\ref{f:5} implies $B = \listgen{a,b}$.
    Also, $A_{\infty} < A_{a}$.
    Using Lemma~\ref{t:3} we have
    $\cz{E_{a}}{b} = \cz{E_{a}}{B} = \cz{E_{b}}{B} = \cz{E_{b}}{a}$.
\end{proof}

Recall that by hypothesis,
$\theta$ is an $A$-signalizer functor on $G$.
Also if $p$ is a prime then a $(p,\theta)$-subgroup of $G$
is an $A$-invariant $p$-subgroup $P$ with the property \[
    \cz{P}{a} \leq \theta(a)
\]
for all $a \in A\nonid$.
The set of maximal $(p,\theta)$-subgroups is denoted by $\syl{p}{G;\theta}$.
The Transitivity Theorem, \cite[p. 309]{KS} asserts that $\theta(A)$
acts transitively by conjugation on $\syl{p}{G;A}$.
Consequently $\cz{P}{a} \in \syl{p}{\theta(a)}$
for all $P \in \syl{p}{G;\theta}$ and $a \in A\nonid$.

\begin{Lemma}\label{t:5}
    Assume Hypothesis~\ref{t:1}.
    Let $p$ be a prime and $P \in \syl{p}{G;\theta}$.
    Then \[
        [\zz{P \cap E_{a}}, \zz{P \cap E_{b}}] = 1.
    \]
\end{Lemma}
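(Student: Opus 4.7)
The plan is to decompose both centres along the $B$-component structure of $E_a$ and $E_b$, exploit the common $B$-centraliser $C := \cz{E_a}{B} = \cz{E_b}{B}$ coming from Lemma~\ref{t:3}, and reduce everything to an analysis inside a common Sylow $p$-subgroup of $C$.

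First I would set $P_a := P \cap E_a$ and $P_b := P \cap E_b$. By the Transitivity Theorem $\cz{P}{a} \in \syl{p}{\theta(a)}$; since $a$ centralises $E_a$ and $E_a \normal \theta(a)$, we obtain $P_a = \cz{P}{a} \cap E_a \in \syl{p}{E_a}$, and similarly $P_b \in \syl{p}{E_b}$, both $A$-invariant. Writing $Q_i := P \cap K_i$ and $R_j := P \cap J_j$, the direct-product decompositions of $E_a$ and $E_b$ give $P_a = \prod_i Q_i$, $P_b = \prod_j R_j$, hence $\zz{P_a} = \prod_i \zz{Q_i}$ and $\zz{P_b} = \prod_j \zz{R_j}$; so it suffices to show $[\zz{Q_i}, \zz{R_j}] = 1$ for all $(i,j)$.

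Second, Lemma~\ref{t:3} yields $C = \cz{E_a}{B} = \cz{E_b}{B}$ and consequently
\[
    \cz{P_a}{B} = \cz{P}{B} \cap E_a = \cz{P}{B} \cap C = \cz{P}{B} \cap E_b = \cz{P_b}{B};
\]
call this common $A$-invariant Sylow $p$-subgroup of $C$ the group $T$. Since $\cz{\zz{P_a}}{B} \leq T$ and also centralises $P_a \supseteq T$, we get $\cz{\zz{P_a}}{B} \leq \zz{T}$, and symmetrically $\cz{\zz{P_b}}{B} \leq \zz{T}$; thus the $B$-fixed parts of $\zz{P_a}$ and $\zz{P_b}$ commute. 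The rank hypothesis $\rank{A} \geq 3 > \rank{B}$ supplies $c \in A \setminus B$. By Corollary~\ref{t:4}, $\cz{E_a}{c} = \cz{E_c}{a}$, so $\cz{\zz{P_a}}{c} \leq P \cap E_c =: P_c$, and similarly $\cz{\zz{P_b}}{c} \leq P_c$. Decompose $\zz{P_a}$ and $\zz{P_b}$ by coprime $A$-action into $B$-fixed parts and sums of $c$-centralised parts for $c \in A \setminus B$. The ``cross'' brackets between a $B$-fixed and a $c$-centralised piece vanish because the $B$-fixed piece sits inside $T \leq P_a \cap P_b$ while the $c$-centralised piece is central in $P_a$ (respectively $P_b$) and therefore commutes with $T$.

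The main obstacle is the remaining bracket $[\cz{\zz{P_a}}{c}, \cz{\zz{P_b}}{c'}]$ for $c, c' \in A \setminus B$: both arguments live in Sylow $p$-subgroups $P_c, P_{c'}$ of $E_c, E_{c'}$, and the only hook is that both are in $\zz{P_a}$ or $\zz{P_b}$ respectively, commuting with $P_a$ or $P_b$. I expect to handle this by replaying the $B$-component decomposition analysis for the companion rank-$2$ subgroups $\listgen{a,c}$ and $\listgen{b,c'}$, reapplying Lemmas~\ref{t:2}(b) and~\ref{t:3} in those settings to re-express $\cz{\zz{P_a}}{c}$ and $\cz{\zz{P_b}}{c'}$ as sitting inside common ``centre-of-Sylow'' pieces analogous to $\zz{T}$ above. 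The critical technical input throughout is the interplay between the rank-$3$ hypothesis on $A$ (providing $c \in A \setminus B$) and Corollary~\ref{t:4}, which tethers the non-$B$-fixed parts back into controllable Sylow subgroups.
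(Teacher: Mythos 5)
Your setup (decomposing $P_a$ and $P_b$ along $B$-components, identifying $T := \cz{P_a}{B} = \cz{P_b}{B}$ via Lemma~\ref{t:3}, and placing $\cz{\zz{P_a}}{B}, \cz{\zz{P_b}}{B}$ inside $\zz{T}$) is sound as far as it goes, but the proof has a genuine gap that you yourself flag: the brackets $[\cz{\zz{P_a}}{c}, \cz{\zz{P_b}}{c'}]$ with $c, c' \in A \setminus B$ are never shown to vanish. Your fallback sketch --- ``replay the $B$-component analysis for $\listgen{a,c}$ and $\listgen{b,c'}$'' --- does not obviously work: Hypothesis~\ref{t:1} requires $A_{\infty}$ to be \emph{contained} in the rank-$2$ subgroup in question, and when $\card{A_{\infty}} = r$ there is no reason for $A_{\infty} \leq \listgen{a,c}$ with $c \in A \setminus B$. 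So Lemmas~\ref{t:2} and~\ref{t:3} may simply not apply to those pairs.

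The paper's proof goes a different and sharper route, and never confronts mixed brackets at all. The key observation you miss is that for $c \in A \setminus B$, $\cz{\zz{P_a}}{c}$ lies not merely in $P_c$ (which you do get from Corollary~\ref{t:4}) but in $\zz{P_c}$: since $c \notin A_a$, the group $\listgen{c}$ permutes the components of $E_a$ --- hence the direct factors of $P_a$ --- semiregularly, which forces $\zz{\cz{P_a}{c}} = \cz{\zz{P_a}}{c}$; then Lemma~\ref{f:4}(c) gives $\cz{P_a}{c} = \cz{P_c}{a}$, and by symmetry $\cz{\zz{P_a}}{c} = \cz{\zz{P_c}}{a} \leq \zz{P_c}$. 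The same holds with $b$ in place of $a$, and since $\zz{P_c}$ is abelian, $[\cz{\zz{P_a}}{c}, \cz{\zz{P_b}}{c}] = 1$ for every $c \in A \setminus B$. The paper then argues by contradiction: assuming $[\zz{P_a}, \zz{P_b}] \neq 1$, two applications of Coprime Action (first a hyperplane $C$, noncyclic because $\rank{A}\geq 3$, then an element $c \in C\nonid$) extract a single $c$ with $[\cz{\zz{P_a}}{c}, \cz{\zz{P_b}}{c}] \neq 1$; by the preceding paragraph $c \in B$, and a case split on whether $B = \listgen{a,c}$ (using Corollary~\ref{t:4} to push $\cz{\zz{P_a}}{c}$ into $P_b$) forces $c \in \listgen{a} \cap \listgen{b} = 1$, a contradiction. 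Your direct ``decompose and check all brackets'' scheme has no analog of this coprime-action reduction, so it cannot avoid the $c \neq c'$ case; without either that reduction or an actual proof that the mixed brackets vanish, the argument is incomplete.
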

\begin{proof}
    For each $c \in A \setminus A_{\infty}$ set $P_{c} = P \cap E_{c}$.
    Now $E_{c} \normal \theta(c)$ so the preceding discussions imply
    $P_{c} \in \syl{p}{E_{c};A}$.

    Let $c \in A \setminus B$.
    We claim that \[
        \cz{\zz{P_{a}}}{c} = \cz{\zz{P_{c}}}{a}.
    \]
    Note that $P_{a}$ is the direct product of its intersections
    with the components of $E_{a}$.
    Since $c \in A \setminus B$ and $A_{a} = \listgen{A_{\infty},a} \leq B$
    we have $c \not\in A_{a}$ so $\listgen{c}$ is semiregular
    on $\compp{E_{a}}$ and hence on the factors of $P_{a}$.
    It follows that $\zz{\cz{P_{a}}{c}} \leq \zz{P_{a}}$.
    Consequently \[
        \zz{\cz{P_{a}}{c}} = \cz{\zz{P_{a}}}{c}.
    \]
    Since $c \not\in A_{a}$,
    Lemma~\ref{f:4}(c) implies
    $a \not\in A_{c}$ and $\cz{P_{a}}{c} = \cz{P_{c}}{a}$.
    Then $\zz{\cz{P_{a}}{c}} = \zz{\cz{P_{c}}{a}}$.
    Repeating the above argument,
    we have $\zz{\cz{P_{c}}{a}} = \cz{\zz{P_{c}}}{a}$
    and the claim follows.
    For any such $c$ we also have $\cz{\zz{P_{b}}}{c} = \cz{\zz{P_{c}}}{b}$.
    We deduce that \[
        [\cz{\zz{P_{a}}}{c}, \cz{\zz{P_{b}}}{c}] = 1
    \]
    for all $c \in A \setminus B$.

    Assume now the lemma to be false.
    Then $[\zz{P_{a}},\zz{P_{b}}] \not= 1$.
    By Coprime Action,
    there exists $C < A$ with $\card{A:C} = r$ and
    $[\cz{\zz{P_{a}}}{C},\zz{P_{b}}] \not= 1$.
    Now $\rank{A} \geq 3$ so $C$ is noncyclic.
    Again by Coprime Action,
    there exists $c \in C\nonid$ with
    $[\cz{\zz{P_{a}}}{C}, \cz{\zz{P_{b}}}{c}] \not= 1$.
    In particular
    \begin{equation}\tag{$*$}
        [\cz{\zz{P_{a}}}{c},\cz{\zz{P_{b}}}{c}] \not= 1.
    \end{equation}
    The previous paragraph implies $c \in B$.
    If $B = \listgen{a,c}$ then as $b \in B$ and using Corollary~\ref{t:4}
    we have \[
        \cz{\zz{P_{a}}}{c} \leq P \cap \cz{E_{a}}{b} = P \cap \cz{E_{b}}{a} \leq P_{b},
    \]
    contrary to $(*)$.
    Thus $c \in \listgen{a}$.
    Similarly $c \in \listgen{b}$.
    But by Hypothesis~\ref{t:1},
    $\listgen{a} \not= \listgen{b}$,
    a contradiction.
\end{proof}

\begin{Lemma}\label{t:6}
    Assume Hypothesis~\ref{t:1}
    Then \[
        [K_{i},J_{j}] = 1
    \]
    for all $i \not= j$.
\end{Lemma}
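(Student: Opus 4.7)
The plan is to use the decomposition $K_i = \listgen{\cz{K_i}{B}, \zz{P_i}}$ supplied by Lemma~\ref{p:1}(c) for a suitably chosen prime $p$, and likewise for $J_j$, in order to reduce the claim $[K_i, J_j] = 1$ to two pieces: a fixed-point commutator already handled by Lemma~\ref{t:2}(b), and a commutator of centres of Sylow subgroups handled by Lemma~\ref{t:5}.

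I would begin by invoking Lemma~\ref{p:1}(c) with the common type $K$ to obtain a prime $p$ such that every $B$-simple group $J$ of type $K$ satisfies $J = \listgen{\cz{J}{B}, \zz{R}}$ for all $R \in \syl{p}{J;B}$. Since $a,b \in B \setminus A_{\infty}$, Theorem~\ref{f:6} forces $a,b \in \Amax$, so each $B$-component $K_i$ of $E_a$ and $J_j$ of $E_b$ is $B$-simple of type $K$, and this generating formula is available uniformly for all of them.

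Next I would fix $P \in \syl{p}{G;\theta}$. As in the proof of Lemma~\ref{t:5}, $P \cap E_a \in \syl{p}{E_a;A}$; since $E_a = K_1 \times \cdots \times K_\alpha$ with the $K_i$ permuted transitively by $A$ and each $K_i$ being $B$-invariant, a standard projection argument using the $A$-invariance of $P \cap E_a$ forces
\[
P \cap E_a = (P \cap K_1) \times \cdots \times (P \cap K_\alpha),
\]
so that $P_i := P \cap K_i \in \syl{p}{K_i;B}$. Defining $Q_j := P \cap J_j \in \syl{p}{J_j;B}$ analogously, I obtain
\[
K_i = \listgen{\cz{K_i}{B}, \zz{P_i}} \quad \text{and} \quad J_j = \listgen{\cz{J_j}{B}, \zz{Q_j}}.
\]
Since $\zz{P_i} \leq \zz{P \cap E_a}$ and $\zz{Q_j} \leq \zz{P \cap E_b}$, Lemma~\ref{t:5} yields $[\zz{P_i}, \zz{Q_j}] = 1$.

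To finish, I would splice the relations together for $i \neq j$. Lemma~\ref{t:2}(b) gives $[\cz{K_i}{B}, J_j] = 1$, and in particular $[\cz{K_i}{B}, \zz{Q_j}] = 1$; combined with $[\zz{P_i}, \zz{Q_j}] = 1$ this forces $[K_i, \zz{Q_j}] = 1$. Coupling this with $[K_i, \cz{J_j}{B}] = 1$ (again Lemma~\ref{t:2}(b)) and the decomposition $J_j = \listgen{\cz{J_j}{B}, \zz{Q_j}}$ yields $[K_i, J_j] = 1$. The main obstacle is ensuring that a single prime $p$ delivered by Lemma~\ref{p:1}(c) serves every component simultaneously; this is exactly why one must verify in advance that every $K_i$ and $J_j$ has the common type $K$, which reduces in turn to knowing $a,b \in \Amax$ via Theorem~\ref{f:6}.
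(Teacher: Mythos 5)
Your proof is correct and takes essentially the same route as the paper: invoke Lemma~\ref{p:1}(c) to get a common prime $p$, pick $P \in \syl{p}{G;\theta}$, obtain the generating decompositions $K_i = \listgen{\cz{K_i}{B},\zz{P\cap K_i}}$ and $J_j = \listgen{\cz{J_j}{B},\zz{P\cap J_j}}$, and then splice Lemma~\ref{t:2}(b) with Lemma~\ref{t:5}. The only differences are cosmetic expansions (spelling out the $\Amax$ membership via Theorem~\ref{f:6} and the projection argument for $P \cap E_a$), which the paper handles more tersely via the Transitivity Theorem and subnormality of $K_i$ in $\theta(a)$.
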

\begin{proof}
    By Theorem~\ref{f:6},
    each member of $\Omega(B)$ is $B$-simple of type $K$
    so Lemma~\ref{p:1}(c) implies there exists a prime $p$
    such that \[
        J = \listgen{ \cz{J}{B}, \zz{P_{0}} }
    \]
    whenever $J \in \Omega(B)$ and $P_{0} \in \syl{p}{J;B}$.
    Choose $P \in \syl{p}{G;\theta}$.
    By the Transitivity Theorem,
    $\cz{p}{a} \in \syl{p}{\theta(a);A}$ so as $K_{i} \subnormal \theta(a)$
    we have $P \cap K_{i} \in \syl{p}{K_{i};B}$.
    Similarly, $P \cap J_{j} \in \syl{p}{J_{j};B}$.
    Consequently \[
        K_{i} = \listgen{ \cz{K_{i}}{B}, \zz{P \cap K_{i}} } \quad\mbox{and}\quad %
        J_{j} = \listgen{ \cz{J_{j}}{B}, \zz{P \cap J_{j}} }.
    \]
    Lemma~\ref{t:2}(b) implies $[\cz{K_{i}}{B},J_{j}] = 1 = [K_{i},\cz{J_{j}}{B}]$.
    As $E_{a} = K_{1} \times\cdots\times K_{\alpha}$
    we have $\zz{P \cap K_{i}} \leq \zz{P \cap E_{a}}$.
    Similarly $\zz{P \cap J_{j}} \leq \zz{P \cap E_{b}}$.
    Lemma~\ref{t:5} implies that $[\zz{P \cap K_{i}}, \zz{P \cap J_{j}}] = 1$.
    It follows that $[K_{i},J_{j}] = 1$.
\end{proof}

\begin{proof}[Proof of Theorem~\ref{thmA}]
    By Theorem~\ref{s:3} it suffices to choose $B \isomorphic \openz_{r} \times \openz_{r}$
    with $A_{\infty} < B < A$ and prove that
    ``does not commute'' is an equivalence relation on $\Omega(B)$.
    This will follow if we can show that \[
        [X,Y] \not= 1 \quad\mbox{if and only if}\quad \cz{X}{B} = \cz{Y}{B}
    \]
    whenever $X,Y \in \Omega(B)$.

    Choose $X,Y \in \Omega(B)$.
    By the definition of $\Omega(B)$,
    there exists $a,b \in B \setminus A_{\infty}$
    such that $X \in \comp{B}{E_{a}}$ and $Y \in \comp{B}{E_{b}}$.
    The ``if'' direction is clear since as $X$ is $B$-simple,
    $\cz{X}{B} \not= 1$.
    Thus to complete the proof,
    we assume $[X,Y] \not= 1$ and aim to show $\cz{X}{B} = \cz{Y}{B}$.

    If $\listgen{a} = \listgen{b}$ then since $\theta$ is an $A$-signalizer functor
    we have $\theta(a) = \theta(b)$,
    whence $E_{a} = E_{b}$.
    As distinct $B$-components of $E_{a}$ commute,
    the conclusion is clear.
    Hence we assume that $\listgen{a} \not= \listgen{b}$.
    Then $B = \listgen{a,b}$ and Hypothesis~\ref{t:1} is satisfied.
    In particular $X = K_{i}$ and $Y = J_{j}$ for some $i,j$.
    Lemmas~\ref{t:6} and \ref{t:3} imply $\cz{X}{B} = \cz{Y}{B}$.
    The proof is complete.
\end{proof}

\bibliographystyle{amsplain}

\end{document}